\tikzset{>=latex}
\newcommand{\spaces}{\mathbf{Spaces}}
\newcommand{\integers}{\mathbb{Z}}
\newcommand{\map}{\mathrm{Map}}
\newcommand{\ext}{\mathrm{Ext}}
\renewcommand{\hom}{\mathrm{Hom}}
\newcommand{\isom}{\cong}
\newcommand{\sing}{\mathrm{Sing}}
\newcommand{\DD}{\mathbf{D}}
\newcommand{\CC}{\mathbf{C}}
\newcommand{\TT}{\mathbf{T}}
\newcommand{\alg}{\mathbf{Alg}}
\newcommand{\halg}{\mathbf{hAlg}}
\newcommand{\sSet}{\mathbf{sSet}}
\renewcommand{\top}{\mathbf{Top}}
\newcommand{\ho}{\mathrm{Ho}}
\newtheorem{theorem}{Theorem}[section]
\newtheorem{lemma}[theorem]{Lemma}
\newtheorem{proposition}[theorem]{Proposition}
\newtheorem{corollary}[theorem]{Corollary}
\newtheorem{example}[theorem]{Example}
\theoremstyle{definition}
\newtheorem{definition}[theorem]{Definition}
\begin{document}

\title{Detecting Mapping Spaces}
\author{Alyson Bittner}
\begin{abstract}
We show if $A$ is a finite CW-complex such that algebraic theories detect mapping spaces out of $A$, then $A$ has the homology type of a wedge of spheres of the same dimension. Furthermore, if $A$ is simply connected then $A$ has the homotopy type of a wedge of spheres.
\end{abstract}
\maketitle

\section{Introduction}
\label{INTRO SEC}

 The problem of studying the structure of mapping spaces has  rich history in 
 homotopy theory. The main goal here is to find out, for a given a space $A$, how 
 one can characterize spaces of the homotopy type of pointed 
 mapping spaces $\map_{*}(A, Y)$.
 An example of a powerful result which solves this problem in a special case
 is the Sullivan conjecture proved by Miller in \cite{Miller1984}. It says  that 
 $\map_{\ast}(B{\mathbb  Z}/p, Y)$ is a contractible space for all finite CW-complexes $Y$.  
 Another example, and one which serves as a motivation  of this paper, is the description
 of the structure of $n$-fold loop spaces accomplished by  Stasheff \cite{Stasheff1963}, May \cite{May1972}, 
 Bordman-Vogt \cite{J.M.Boardman1973}, Segal \cite{Segal1974}, Bousfield \cite{Bousfield1992}, and others. 
 Their work establishes criteria for recognizing if a space $X$ has a homotopy 
 type of a mapping space $\Omega^{n}Y = \map_{*}(S^{n}, Y)$ for some $Y$, as well as methods for reconstructing 
 the space $Y$ out of $X$.  Various approaches led to different ways of expressing these results, in the language of operads, PROPs, theories, special  simplicial spaces, Eilenberg-MacLane objects etc.  
 However, as much as they are different, there is a common feature they share. In each case a space $X$ is shown to 
 be an $n$-fold loop space if it admits certain maps between finite Cartesian products of $X$:
 $$X^{k}\to X^{l}$$ 
 (or, in the case of Segal and Bousfield's work maps $X[k]\to X[l]$, 
 where $X[k]\simeq X^{k}$, $X[l]\simeq X^{l}$),  and if compositions of these 
 maps satisfy appropriate relations. In effect this shows that spaces 
 of the homotopy type of loop spaces can be thought of as a kind of  algebraic objects in the category
 $\spaces_{\ast}$ of pointed topological spaces. 

Inspired by these results in this paper we investigate spaces $A$ with the property that 
mapping spaces $\map_{\ast}(A, Y)$ can be described as spaces equipped with some finitary algebraic structure. 
To make it precise, we assume that there exists an algebraic theory  $\TT$ in the sense of Lawvere, such that 
the homotopy category of algebras over $\TT$ is equivalent to the homotopy category of mapping spaces 
$\map_{\ast}(A, X)$. As we explain in Section \ref{ALG THEORIES SEC}, the language of algebraic theories subsumes all formalisms used to describe loop spaces mentioned above, so existence of an operad, PROP etc. characterizing 
mapping spaces implies existence of an algebraic theory satisfying the above condition. If such an algebraic theory 
exists then we say that $A$ is a  \emph{detectable} space (see \ref{detectable} for the precise definition). The results on 
the recognition of loop spaces imply that spheres $S^{n}$ are detectable for all $n\geq 0$, and their direct generalization 
gives that spaces of the form $\bigvee_{m} S^{n}$ are detectable for all $m, n\geq 0$. 
Our main result can be  stated as follows:

\begin{theorem}\label{MainThm}
Let $A$ be a detectable finite pointed CW-complex, then 
$$H_*(A, \integers)\isom H_*(\textstyle{\bigvee_m S^n}, \integers)$$
for some $m, n\geq 0$.
\end{theorem}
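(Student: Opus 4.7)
\emph{Proof plan.}
My approach is to translate the detectability of $A$ into constraints on $H_*(A; \integers)$ by probing $A$ with Eilenberg--MacLane spaces, and then to argue that these constraints force the homology to be concentrated in a single degree and to be torsion-free. The starting point is a Yoneda argument: since $\map_*(A, X)^k \cong \map_*(A^{\vee k}, X)$, the set of $k$-ary operations of $\TT$ is identified with $[A, A^{\vee k}]$ in $\ho(\spaces_*)$, so $\TT$ records the pointed homotopy classes of cooperations $A \to A^{\vee k}$. Applied to $X = K(\integers, n)$, the generalized Eilenberg--MacLane space $\map_*(A, K(\integers, n)) \simeq \prod_{i \geq 0} K(\tilde H^{n-i}(A;\integers), i)$ acquires a $\TT$-algebra structure, and detection identifies the category of such $\TT$-algebras with mapping spaces out of $A$; the compatibility across $n$ via $K(\integers, n) \simeq \Omega K(\integers, n+1)$ gives a rich comparison.

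The central and most difficult step is to show that $\tilde H_*(A; \integers)$ is concentrated in a single degree. I would proceed by contradiction: let $n$ be the smallest degree in which $\tilde H_*(A;\integers)$ is nonzero and suppose $\tilde H_{m}(A;\integers) \neq 0$ for some $m > n$. Then $\map_*(A, K(\integers, m))$ has at least two distinct nonzero homotopy groups in positive degrees, and its Postnikov layers are themselves Eilenberg--MacLane $\TT$-algebras. I would show that the $\TT$-algebra data arising from the cooperations $A \to A^{\vee k}$ does not determine the Postnikov $k$-invariants gluing these layers, so there exists a $\TT$-algebra assembled from the same layers with a modified $k$-invariant that is not realized as $\map_*(A, Y)$ for any $Y$, contradicting essential surjectivity of the detection equivalence. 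The underlying mechanism is that a single Lawvere theory $\TT$, acting uniformly, cannot encode the ``cross-dimensional'' data required by a space whose cohomology spans several degrees.

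Once $\tilde H_*(A;\integers)$ is concentrated in a single degree $n$, finiteness of $A$ makes it a finitely generated abelian group. To rule out torsion I would compare the $\TT$-structure on $\map_*(A, K(\integers, n))$ with that on $\map_*(A, K(\integers/p, n))$ through the Bockstein: a nontrivial $p$-torsion summand in $\tilde H_n(A;\integers)$ would yield a $\TT$-algebra structure in the $\mathbb{F}_p$ picture not realized integrally, again contradicting detection. Hence $\tilde H_n(A; \integers) \cong \integers^m$ for some $m$, and $H_*(A; \integers) \cong H_*(\bigvee_m S^n; \integers)$.
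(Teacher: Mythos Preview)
Your proposal has a genuine gap at its core. The entire difficulty of the theorem is concentrated in the step you describe as ``I would show that the $\TT$-algebra data \dots\ does not determine the Postnikov $k$-invariants \dots, so there exists a $\TT$-algebra \dots\ that is not realized as $\map_*(A,Y)$ for any $Y$.'' This is exactly the content of the theorem, and you give no mechanism for producing such an unrealizable algebra or for certifying that it is unrealizable. Postnikov towers of $\TT$-algebras are perfectly good $\TT$-algebras (Postnikov truncation preserves products), but nothing you have said explains why a truncated or re-glued algebra could not arise as $\map_*(A,Y)$ for some other $Y$; after all, every homotopy $\TT$-algebra \emph{is} realized in this way by detectability. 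The Bockstein sketch for ruling out torsion has the same problem: you assert the existence of an unrealizable $\TT$-algebra without any construction. A secondary issue is your Yoneda identification of operations with $[A, A^{\vee k}]$: Definition~\ref{detectable} only posits \emph{some} theory $\TT$ together with a Quillen equivalence, not that $\TT$ is the canonical theory $\TT^A$, so this identification is not available in general.

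The paper's route is quite different and avoids these difficulties by never trying to build an unrealizable algebra directly. The key observation is that any $f$-localization functor $L_f$ preserves finite products up to weak equivalence, so $L_f$ carries (homotopy) $\TT$-algebras to homotopy $\TT$-algebras; detectability then forces $L_f\map_*(A,X)$ to again be a mapping space out of $A$. A further idempotent-functor argument upgrades this to $L_f\map_*(A,X)\simeq\map_*(A,L_{A\wedge f}X)$. With this in hand, one probes with Eilenberg--MacLane targets and applies \emph{specific} localization functors (an explicit $L_f$ killing torsion in $\pi_1$ and higher homotopy; the nullification $P_{S^{n+1}}$) to isolate a single homotopy group of $\map_*(A,K(\integers,N))$. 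Because the target stays Eilenberg--MacLane under localization, one can compute both sides explicitly and derive a numerical contradiction from the universal coefficient theorem. In short, the missing idea in your plan is the use of localization functors as the device that manufactures, in a controlled way, the ``modified'' $\TT$-algebras you were hoping to find.
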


As a consequence we obtain 

\begin{corollary}
If $A$ is a simply connected finite pointed CW-complex then 
$A$ is detectable if and only if $A\simeq \bigvee_m S^n$ for some $m\geq 0$, $n>1$. 
\end{corollary}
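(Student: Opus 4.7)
The plan is to reduce the corollary to the main theorem (Theorem~\ref{MainThm}) together with the detectability of wedges of equidimensional spheres noted in the introduction, and to upgrade the resulting homology equivalence to a homotopy equivalence via Hurewicz and Whitehead.

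The reverse implication is immediate from the remark in the introduction that $\bigvee_m S^n$ is detectable for all $m,n\geq 0$ (so in particular for any $n>1$). For the forward implication, suppose $A$ is a simply connected detectable finite pointed CW-complex. By Theorem~\ref{MainThm} there exist $m,n\geq 0$ with $H_*(A,\integers)\isom H_*(\bigvee_m S^n,\integers)$. I would first dispose of the degenerate ranges of $n$. If $n=0$, then $\bigvee_m S^0$ is disconnected unless $m=0$; since $A$ is simply connected it is connected, which forces $m=0$ and $A$ to be acyclic, hence contractible by the simply connected Whitehead theorem, which in turn coincides with the empty wedge $\bigvee_0 S^k$ for any $k>1$. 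If $n=1$, then $H_1(A,\integers)\isom\integers^m$; but $A$ is simply connected, so Hurewicz gives $H_1(A,\integers)=0$, forcing $m=0$ and, by the same argument, $A\simeq\ast$.

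It remains to treat $n\geq 2$. Here $A$ is simply connected with $\widetilde{H}_i(A,\integers)=0$ for $i<n$, so by the Hurewicz theorem $A$ is $(n-1)$-connected and the Hurewicz map $\pi_n(A)\to H_n(A,\integers)\isom\integers^m$ is an isomorphism. Choosing based maps $f_1,\dots,f_m\colon S^n\to A$ representing a basis and taking their wedge produces a map
\[
f\colon \bigvee_m S^n \longrightarrow A
\]
which is an isomorphism on $H_n$ by construction. Since the main theorem tells us the integral homology of both spaces is concentrated in degrees $0$ and $n$, the map $f$ is a homology isomorphism. Both spaces are simply connected CW-complexes, so Whitehead's theorem promotes $f$ to a homotopy equivalence.

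The main conceptual step is really the case analysis that reduces everything to $n\geq 2$, since in that range the passage from homology type to homotopy type is a standard Hurewicz--Whitehead argument; the only mild subtlety is the bookkeeping in the degenerate cases $n=0,1$, which must be shown to still fit the conclusion by collapsing to $m=0$ and then observing that a point is $\bigvee_0 S^n$ for any $n>1$.
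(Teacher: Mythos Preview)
Your proof is correct and follows essentially the same route as the paper: apply Theorem~\ref{MainThm} to obtain the homology type of a wedge of equidimensional spheres, then use the Hurewicz isomorphism to realize generators of $H_n(A)$ by maps from $S^n$, and conclude via Whitehead's theorem that the resulting map $\bigvee_m S^n\to A$ is a homotopy equivalence (this is the argument the paper gives for the corollary following Theorem~\ref{FirstThm}). Your treatment of the degenerate cases $n=0,1$ is more careful than the paper's, which simply omits them.
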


We note that the class of detectable spaces is larger if we consider also infinite CW-complexes. For example
Sartwell \cite{SartwellThesis} showed that spheres localized at a set of primes are detectable.

\ 

{\bf Organization of the paper.}
In section \ref{ALG THEORIES SEC} we introduce algebraic theories, their algebras and describe how they can be used to detect mapping spaces. The main idea of the proof of Theorem \ref{MainThm} is to show that if a space $A$ is detectable then mapping spaces $\map_*(A,X)$ are preserved by localization functors. In sections \ref{LGOOD SEC} and \ref{COMMUTATION SEC} we show that this preservation property imposes certain conditions on the space $A$. Lastly, in section \ref{MAIN THM SEC} we complete the proof of Theorem \ref{MainThm}.

\

{\bf Acknowledgement.}
I would like to thank my advisor, Bernard Badzioch, for his guidance that led to this result, and his comments through many versions of this paper. An additional thanks to Emmanuel Dror Farjoun for the reference that inspired this work as well as some helpful conversations.

\section{Algebraic Theories}
\label{ALG THEORIES SEC}

\begin{definition}
\label{STRICT ALG}
An algebraic theory $ \TT$ is  a  simplicial category 
with objects $T_0, T_1, \dots$  such that $T_n$ is the $n$-fold categorical product of 
$T_1$. In particular $ T_0$ is the terminal object in $ \TT$. 
We assume that it is also  the initial object.
   
Given an algebraic theory $\TT$, a $\TT$-algebra 
is a product-preserving  simplicial functor $\Phi \colon \TT\to\sSet_*$ 
where $\sSet_*$ denotes the category of pointed simplicial sets. 
\end{definition}

We will denote by $ \alg^{\TT}$ the category of all strict 
$ \TT$-algebras with natural transformations of functors 
as morphisms. We have a forgetful functor 
$$U\colon  \alg^{\TT} \to \sSet_{*}$$
given by $U(\Phi) = \Phi(T_{1})$. 
We will say that the geometric realization  of the simplicial set $U(\Phi)$ is the 
\emph{underlying space} of the algebra $\Phi$. Intuitively, giving a $\TT$-algebra 
$\Phi$ amounts to defining a certain algebraic structure, determined by  $\TT$,  
on the underlying space of $\Phi$.

Algebraic theories appear naturally as a tool for describing homotopy structures on spaces. 
Out of the formalisms characterizing loop spaces mentioned in Section \ref{INTRO SEC} is
the work of Boardman and Vogt  explicitly uses algebraic theories. Furthermore,  any operad $\mathcal C$ 
defines an algebraic  theory $\TT_{\mathcal C}$ as follows. Take the monad $ F\colon \sSet\to\sSet$ associated 
with the operad $\mathcal C$ and consider free algebras $F(\bigvee_{k} S^{0})$ over $F$ for 
$k\geq 0$. These algebras define a full subcategory $\DD$ in the category of all algebras over $F$. 
The algebraic theory  $\TT_{\mathcal C}$ is isomorphic to the opposite category of $\DD$. 
One can check that the category of algebras over the operad $\mathcal C$ is isomorphic to the 
category of strict $\TT_{\mathcal C}$-algebras. A similar reasoning can be used to construct an algebraic 
theory $\TT$ associated with a given PROP in such a way, that spaces equipped with an action of the PROP 
coincide with spaces  equipped with a structure of a strict $\TT$-algebra.  
The relationship between algebraic theories and special simplicial spaces of Segal or Eilenberg-MacLane objects of Bousfield is somewhat more involved, but it has been established by Badzioch in \cite{Badzioch2005}. 

Algebraic theories have been used to detect mapping spaces via an equivalence between the categories of algebras and the category of topological spaces with a particular model category structure. The category of algebras $\alg^{\TT}$ over a theory $\TT$ can be equipped with a model category structure where a natural transformation $\Phi\to\Psi$ is a weak equivalence or fibration if the induced map $\Phi(T_1)\to\Psi(T_1)$ is a weak equivalence or fibration respectively in the category of simplicial sets. Given a $CW$-complex $A$, denote by ${\mathbf R}^{A}\top_{\ast}$ the category $\top_{\ast}$ of pointed topological spaces as a simplicial category (with $\hom_{\mathbf{R}^A\top_*}(X,Y)$ the simplicial mapping space $\sing(\map_*(X,Y))$ which we will denote by $\map^{\Delta}_*(X,Y)$), taken with the model category structure where fibrations are Serre fibrations and a weak equivalences are maps $f\colon X \to Y$ that induce weak equivalences on simplicial mapping complexes
$f_{\ast}\colon \map^{\Delta}_{\ast}(A, X) \to \map^{\Delta}_{\ast}(A, Y)$. In other words,  ${\mathbf R}^{A}\top_{\ast}$
is obtained by taking the right Bousfield localization of the usual model category structure on $\top_{\ast}$ with respect 
to the space $A$ as in \cite[5.1.1]{Hirschhorn2009}.

\begin{definition}\label{detectable} $A$ is \emph{detectable} if there exists an algebraic theory $\TT$ such that there is a Quillen equivalence
$$\tilde{B}:\alg^{\TT}\rightleftarrows R^{A}\top_*:\tilde{\Omega}$$
with underlying space $U(\tilde{\Omega}X)$ naturally weakly equivalent to $\map_*(A,X)$.
\end{definition}

The Quillen equivalence $(\tilde{B},\tilde{\Omega})$ means that any mapping space $\map_*(A,X)$ can be equipped (up to weak equivalence) with a $\TT$-algebra structure. Moreover, if $\Phi$ is a $\TT$-algebra, then $U(\Phi)\simeq\map_*(A,\tilde{B}\Phi)$, so any $\TT$-algebra comes from a mapping space.

\begin{example}
Given a pointed space $A$ there is a canonical algebraic theory $\TT^A$ such that 
$$\hom_{\TT^{A}}(T_n, T_m)=\map^{\Delta}_*\left(\bigvee^m A, \bigvee^n A\right).$$

The functor $\Omega^A:R^A\top_{\ast}\to\alg^{T^A}$ defined by $\Omega^{A}(X)(T_{k})=\map_{\ast}^{\Delta}\left(\bigvee_kA, X\right)$ has a left adjoint $B^A$. The adjunction is not always a Quillen equivalence, but in particular,
$$B^{S^n}:\alg^{\TT}\rightleftarrows R^{A}\top_*:\Omega^{S^n}$$
is a Quillen equivalence \cite{BadziochChungVoronov2007} as well as 
$$B^{S^n_P}:\alg^{\TT}\rightleftarrows R^{A}\top_*:\Omega^{S^n_P}$$
where $S^n_P$ is the sphere localized at a set of primes \cite{SartwellThesis}. It follows that spheres and spheres localized at a set of primes are detectable, as well as their wedges of the same dimension.
\end{example}
 
For our purposes it will be useful to rephrase the notion of detectability using homotopy algerbas in place of strict algebras. A \emph{homotopy algebra} over an algebraic theory $\TT$ is a simplicial functor $\TT\to\sSet_*$ that preserves products up to a weak equivalence. We denote the category of homotopy algebras over $\TT$ by $\halg^{\TT}$.  By the rigidification theorem of algebras \cite[1.3]{Badzioch2002}, there is a Quillen equivalence $\halg^{\TT}\rightleftarrows\alg^{\TT}$ between the categories of homotopy and strict algebras over $\TT$. Using this fact, we obtain:
 
  \begin{proposition}\label{detectablehomotopyalgs}
  A space $A$ is \emph{detectable} if and only if there is some algebraic theory $\TT$ such that there is a Quillen equivalence
 $$\tilde{B}:\halg^{\TT}\rightleftarrows R^{A}\top_*:\tilde{\Omega}$$
with underlying space $U(\tilde{\Omega}X)$ weakly equivalent to $\map_*(A,X)$
  \end{proposition}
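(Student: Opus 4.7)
My plan is to deduce this as a purely formal consequence of the rigidification Quillen equivalence $\halg^{\TT}\rightleftarrows\alg^{\TT}$ from \cite[1.3]{Badzioch2002}. Since the statement differs from Definition \ref{detectable} only in replacing $\alg^{\TT}$ by $\halg^{\TT}$, the natural move is to compose whichever Quillen equivalence we are handed with the rigidification, and then check that the underlying-space condition $U(\tilde{\Omega}X)\simeq\map_*(A,X)$ survives the composition.

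For the forward direction, suppose $A$ is detectable and let $\tilde{B}_0:\alg^{\TT}\rightleftarrows R^A\top_*:\tilde{\Omega}_0$ be the given Quillen equivalence. Writing the rigidification Quillen pair as $L:\halg^{\TT}\rightleftarrows\alg^{\TT}:i$, where $i$ is the canonical inclusion of strict algebras into homotopy algebras, I would form the composite adjunction $\tilde{B}_0\circ L:\halg^{\TT}\rightleftarrows R^A\top_*:i\circ\tilde{\Omega}_0$. This composite is a Quillen equivalence since it is the composition of two. Because the inclusion $i$ is the identity on the underlying simplicial functor, evaluation at $T_1$ is unchanged, so $U(i\tilde{\Omega}_0 X)=U(\tilde{\Omega}_0 X)\simeq\map_*(A,X)$ naturally. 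This gives the desired equivalence.

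For the converse, given a Quillen equivalence $\tilde{B}:\halg^{\TT}\rightleftarrows R^A\top_*:\tilde{\Omega}$ with the underlying-space condition, I would compose with the rigidification in the opposite direction to land in $\alg^{\TT}$. The new right adjoint sends $X$ to a rigidification of $\tilde{\Omega}X$, which by the rigidification theorem is weakly equivalent to $\tilde{\Omega}X$ in $\halg^{\TT}$. Since weak equivalences in $\halg^{\TT}$ are defined to be those natural transformations inducing weak equivalences of underlying spaces, applying $U$ yields a weak equivalence, so the underlying space is still naturally weakly equivalent to $\map_*(A,X)$.

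The step I would expect to be the main (though minor) obstacle is verifying precisely that the underlying-space condition is transported correctly across the composite adjunction in both directions. This reduces to two observations: the weak equivalences in both $\alg^{\TT}$ and $\halg^{\TT}$ are created by the forgetful functor $U=\mathrm{ev}_{T_1}$, and the rigidification either preserves $U$ on the nose (as the inclusion does) or up to natural weak equivalence (as the strictification does by \cite[1.3]{Badzioch2002}). Once these are in hand, the proof is essentially formal, since the nontrivial content has already been isolated in the rigidification theorem.
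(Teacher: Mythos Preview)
Your proposal matches the paper's approach exactly: the paper simply invokes the rigidification Quillen equivalence $\halg^{\TT}\rightleftarrows\alg^{\TT}$ from \cite[1.3]{Badzioch2002} and asserts the proposition without further detail, so your elaboration is already more thorough than what appears there. One minor caution on the converse direction: the strictification $L$ is a \emph{left} Quillen functor in the rigidification pair, so $L\circ\tilde{\Omega}$ is not literally a right adjoint as you describe; this wrinkle is easily handled (and irrelevant for the paper's applications, which only use the induced equivalence of homotopy categories), but you should phrase that step more carefully.
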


\section{L-Good Spaces}
\label{LGOOD SEC}

As the first step towards proving Theorem \ref{MainThm} we will show that if $A$ is detectable, then mapping spaces out of $A$ behave like loop spaces with respect to localization functors. Let $f:Z\to W$ be a map of pointed spaces, a pointed space $X$ is said to be \emph{f-local} if the induced map
$$f_*:\map_*(W,X)\to\map_*(Z,X)$$
is a homotopy equivalence. The $f$-localization of $X$, denoted $L_fX$, is a functorial construction of the $f$-local space closest to $X$. Explicitly, $L_fX$ is equipped with a coaugmentation map $X\to L_fX$ and given any $f$-local space $Y$ with a map $g:X\to Y$, there is a factorization unique up to homotopy:

$$\begin{tikzpicture}
\matrix (m) 
[matrix of math nodes, row sep=2em, column sep=3em, text height=1.5ex, text depth=0.25ex]
{
X &  L_{f}X\\
 &   Y\\
};
\path[->]
(m-1-1) edge  (m-1-2)
(m-1-1)   edge (m-2-2)
;
\path[->, dashed]
(m-1-2)  edge  (m-2-2)
;
\end{tikzpicture}$$
In the particular case where $f:Z\to\ast$ is a constant map, we denote $L_fX$ by $P_ZX$ and call it the \emph{Z-nullification} of $X$. If $X$ is local with respect to the map $Z\to\ast$ that is, $\map_*(Z,X)\simeq\ast$ we say $X$ is a \emph{$Z$-null} space.

It is well known \cite[3.A.1]{Farjoun1995} that loop spaces commute with localization functors in the sense that for any map $f$ of pointed spaces and $X$ a pointed space there is a weak equivalence $$L_f\Omega X\simeq\Omega L_{\Sigma f}X.$$ 
This property is generalized with the following definition.
\begin{definition} A space $A$ is \emph{L-good} if for any map $f$ the localization functor $L_f$ preserves mapping spaces out of $A$, that is,
$$L_f\map_*(A,X)\simeq\map_*(A,Y).$$ 
\end{definition}

\begin{proposition}
If the space $A$ is detectable, then $A$ is an $L$-good space.
\end{proposition}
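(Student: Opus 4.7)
The plan is to transport the question across the Quillen equivalence of Proposition \ref{detectablehomotopyalgs} and exploit the fact that localization functors commute with finite products. Given $X\in R^A\top_*$, the homotopy $\TT$-algebra $\tilde{\Omega}X$ has underlying space weakly equivalent to $\map_*(A,X)$, and applying $L_f$ object-wise to $\tilde{\Omega}X$ should produce another homotopy $\TT$-algebra whose underlying space is $L_f\map_*(A,X)$. Transporting this new algebra back through $\tilde{B}$ will furnish the space $Y$ demanded by the definition of $L$-goodness.

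Executing this plan, fix $X$ and set $\Phi=\tilde{\Omega}X$. Since $L_f\colon\sSet_*\to\sSet_*$ is a simplicial endofunctor, the composite $L_f\Phi\colon\TT\to\sSet_*$ is again a simplicial functor. To see that $L_f\Phi$ preserves products up to weak equivalence, I would invoke the standard fact (Farjoun, 1.A.8) that any localization functor commutes with finite products: the product of $f$-local spaces is $f$-local, and the coaugmentation
$$X_1\times\cdots\times X_n\longrightarrow L_fX_1\times\cdots\times L_fX_n$$
is an $f$-equivalence into an $f$-local target, hence itself a localization. Combined with the homotopy-algebra identity $\Phi(T_n)\simeq\Phi(T_1)^n$, this gives
$$(L_f\Phi)(T_n)\simeq L_f\bigl(\Phi(T_1)^n\bigr)\simeq \bigl(L_f\Phi(T_1)\bigr)^n=(L_f\Phi)(T_1)^n,$$
so $L_f\Phi\in\halg^{\TT}$. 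Proposition \ref{detectablehomotopyalgs} then supplies a space $Y\in R^A\top_*$ with $\tilde{\Omega}Y\simeq L_f\Phi$ in $\halg^{\TT}$, and taking underlying spaces yields
$$\map_*(A,Y)\simeq U(\tilde{\Omega}Y)\simeq U(L_f\Phi)=L_f U(\Phi)\simeq L_f\map_*(A,X),$$
as required.

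The main obstacle I anticipate is the technical verification that the object-wise construction $L_f\Phi$ really defines a simplicial functor landing in $\halg^{\TT}$. This hinges on the simplicial enrichment of $L_f$ and on its commutation with finite products holding through a sufficiently natural weak equivalence to keep the coherence diagrams of a homotopy algebra intact. Both points are standard in the localization literature, but care is needed so that the chain of equivalences above can be read as an isomorphism in the appropriate homotopy category; once that bookkeeping is handled, the remainder of the argument is a formal consequence of the Quillen equivalence.
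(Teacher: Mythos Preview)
Your argument is correct and follows essentially the same route as the paper: compose the (homotopy) $\TT$-algebra $\Phi$ with $L_f$, use that $L_f$ preserves finite products up to weak equivalence to see that $L_f\Phi$ is again a homotopy $\TT$-algebra, and then invoke detectability to realize its underlying space $L_f\map_*(A,X)$ as $\map_*(A,Y)$. Your write-up is more explicit about the technical bookkeeping (simplicial enrichment, coherence of the product comparison), but the paper's proof is the same idea stated more tersely.
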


\begin{proof}
Assume $A$ is detectable by the algebraic theory $\TT$ and let $X$ be any space. Then $\map_*(A,X)$ is weakly equivalent to the underlying space of some algebra $\Phi$ over $\TT$. $L_f$ preserves products up to a weak equivalence, so $L_f\circ\Phi$ is a homotopy algebra with underlying space $L_f\map_*(A,X)$. By detectability of $A$, the underlying space of any homotopy algebra over $\TT$, in particular $L_f\circ\Phi$, is up to weak equivalence a mapping space out of $A$. We then obtain $$L_f\map_*(A,X)\simeq \map_*(A,Y)$$ for some space $Y$.
\end{proof}
 If $A$ is an $L$-good finite CW-complex, then the rational homotopy type of $A$ is determined.

\begin{theorem}\label{rationaltype}\cite[1.2]{BadziochDorabiala2010}
\label{rationaltype}
Let $A$ be a finite, connected, pointed CW-complex such that for some $p>q>0$ we have $H^p(A,\mathbb{Q})\neq 0\neq H^q(A,\mathbb{Q})$. Then $A$ is not an L-good space.
\end{theorem}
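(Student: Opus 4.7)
The plan is to exhibit a localization $L_f$ and a space $X$ for which $L_f\map_*(A,X)$ has a rational homotopy profile unattainable by any $\map_*(A,Y)$, contradicting $L$-goodness of $A$.

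First, I would take $X = K(\mathbb{Q}, N)$ for an integer $N > p$. Then $\map_*(A,X)$ is a rational generalized Eilenberg--MacLane space with homotopy
\[
\pi_i\map_*(A, K(\mathbb{Q}, N)) \isom \tilde H^{N-i}(A, \mathbb{Q}),
\]
which by hypothesis is non-zero in both degrees $i = N-q$ and $i = N-p$. Now take $f\colon K(\mathbb{Q}, N-p) \to \ast$, so that $L_f$ is the corresponding nullification. Applied to the GEM above, $L_f$ kills the rational homotopy in degree $N-p$ while preserving that in degree $N-q$; denote the resulting space by $Z$, so that $\pi_{N-q}Z \neq 0$ and $\pi_{N-p}Z = 0$.

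The crux is to show that no $Y$ satisfies $\map_*(A, Y) \simeq Z$. The most transparent approach passes through Sullivan minimal models: by Brown--Szczarba and Haefliger, the minimal model of $\map_*(A, Y)$ is built from the minimal model of $Y$ together with $\tilde H^*(A, \mathbb{Q})$, and the two non-vanishing cohomology degrees $q$ and $p$ of $A$ produce paired generators in $\map_*(A, Y)$: any rational homotopy generator of $Y$ in degree $r$ contributes generators in degrees $r-q$ and $r-p$. Consequently, the profile ``$\pi_{N-q}\neq 0$ and $\pi_{N-p} = 0$'' cannot be realized.

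The main obstacle is making the minimal-model argument rigorous: one must verify that the paired generators contributing to degrees $N-q$ and $N-p$ cannot all be killed by decomposable relations, for every choice of $Y$. A more hands-on alternative uses the rationalized Federer spectral sequence $E_2^{s,t} = \tilde H^s(A; \pi_t(Y)\tensor\mathbb{Q}) \Rightarrow \pi_{t-s}\map_*(A,Y)\tensor\mathbb{Q}$: the existence of a surviving class at $E_\infty^{q, N}$ (needed to produce $\pi_{N-q}\neq 0$ via an $H^q$-contribution) forces a surviving class at $E_\infty^{p, N}$ via the parallel $H^p$-contribution, hence $\pi_{N-p}\map_*(A, Y)\neq 0$. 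Ruling out higher differentials that could kill this paired class rationally, for arbitrary $Y$, is the essential technical difficulty.
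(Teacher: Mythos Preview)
The paper does not prove this theorem; it is quoted from \cite[1.2]{BadziochDorabiala2010} and used as a black box. So there is no ``paper's own proof'' to compare against, and your proposal must be judged on its own.

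There is a concrete error in your construction. With $f\colon K(\mathbb{Q},N-p)\to\ast$ and $X$ a rational GEM, the nullification $P_{K(\mathbb{Q},N-p)}$ does \emph{not} kill only $\pi_{N-p}$. Since $p>q$ we have $N-q>N-p$, and for a rational space $X$ the condition $\map_*(K(\mathbb{Q},N-p),X)\simeq\ast$ forces $\pi_i(X)=0$ for all $i\geq N-p$ (for $N-p$ odd, $K(\mathbb{Q},N-p)\simeq S^{N-p}_{\mathbb{Q}}$ and this is literally a Postnikov truncation; for $N-p$ even one still kills $K(\mathbb{Q},i)$ factors for every $i\geq N-p$ because $H^{i}(K(\mathbb{Q},N-p);\mathbb{Q})\neq 0$ in infinitely many degrees). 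Hence your $Z$ has $\pi_{N-q}Z=0$ as well, and the intended ``forbidden profile'' never arises. You would need a different localization --- one that kills the \emph{higher} degree $N-q$ while retaining the lower degree $N-p$, e.g.\ a Postnikov-type truncation $P_{S^{N-p+1}}$ --- and then the roles of $p$ and $q$ in the subsequent argument must be swapped.

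Even after repairing the localization, the part you flag as ``the essential technical difficulty'' is indeed the whole proof. The Federer spectral sequence heuristic (``a surviving class at $E_\infty^{q,N}$ forces one at $E_\infty^{p,N}$'') is not true for arbitrary $Y$: differentials in that spectral sequence are governed by $k$-invariants of $Y$, over which you have no control, and there is no a priori reason a contribution through $H^q$ must be accompanied by one through $H^p$ in the same internal degree. Likewise the Brown--Szczarba/Haefliger minimal-model description applies to $\map_*(A,Y)$ only for nilpotent (or at least rationalizable, finite-type) $Y$, whereas $L$-goodness quantifies over all $Y$. Turning your outline into a proof requires either restricting $Y$ (and arguing that this suffices) or a genuinely different mechanism; as written, the proposal identifies the right kind of obstruction but does not supply the argument.
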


 This result can be extended to rule out some cases of torsion.

\begin{proposition}\label{torsion}
Let $A$ be a finite, pointed, connected $CW$-complex. If $A$ is L-good and $H_n(A,\integers)$ contains a torsion-free element for some $n>0$ then $H_q(A,\integers)=0$ for all $0<q<n$.
\end{proposition}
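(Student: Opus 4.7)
The plan is to apply Theorem \ref{rationaltype} to reduce the statement to a purely torsion question, and then mimic the rational obstruction argument at a prime. First, the torsion-free element of $H_n(A,\integers)$ implies $H_n(A,\mathbb{Q})\neq 0$; since $A$ is L-good, Theorem \ref{rationaltype} forces $H_q(A,\mathbb{Q})=0$ for every $0<q<n$. Hence each $H_q(A,\integers)$ with $0<q<n$ is a finite torsion group, and what remains is to rule out any such torsion.

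Suppose for contradiction that $H_q(A,\integers)$ has $p$-primary torsion for some prime $p$ and some $0<q<n$. By the universal coefficient theorem both $H^n(A,\mathbb{F}_p)$ and $H^q(A,\mathbb{F}_p)$ are then nonzero: the first because the torsion-free class in $H_n$ reduces mod $p$ to a nonzero class, the second from the assumed $p$-torsion. I would now mimic the mechanism behind Theorem \ref{rationaltype} with $\mathbb{F}_p$ in place of $\mathbb{Q}$. A natural test space is $X=K(\mathbb{F}_p,n)$, whose pointed mapping space $\map_*(A,X)$ is a generalized Eilenberg--MacLane space with $\pi_i=H^{n-i}(A,\mathbb{F}_p)$, and hence has nontrivial homotopy in two distinct degrees: degree $0$ (from $H^n$) and degree $n-q>0$ (from $H^q$). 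Applying a localization functor $L_f$ that selectively truncates or separates these two layers of the GEM, and then invoking L-goodness to rewrite the result as $\map_*(A,Y)$ for some space $Y$, should force $Y$ into a shape incompatible with $A$ having mod-$p$ cohomology simultaneously in the two degrees $q$ and $n$.

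The main obstacle lies precisely in the last step: producing an explicit map $f$ so that $L_f\map_*(A,K(\mathbb{F}_p,n))$ provably cannot be realized as $\map_*(A,Y)$ for any $Y$. This is the mod-$p$ counterpart of the technical heart of \cite{BadziochDorabiala2010}, and I expect that transposing their construction from rational to $\mathbb{F}_p$-coefficients, while tracking the fact that $A$ is merely connected and not assumed simply connected, is where the real work is concentrated.
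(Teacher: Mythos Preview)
Your reduction via Theorem~\ref{rationaltype} to the case where each $H_{q}(A;\integers)$ with $0<q<n$ is finite torsion matches the paper's first move, but after that your proposal has a genuine gap: you correctly flag that producing the right localization $f$ is the crux, and you leave it open, hoping the argument of \cite{BadziochDorabiala2010} transposes to $\mathbb{F}_{p}$-coefficients. The paper does not attempt any such transposition. It uses a different test space, $K(\integers,n+1)$ rather than your $K(\mathbb{F}_{p},n)$, and an explicit map
\[
f\colon S^{2}\vee\Bigl(\textstyle\bigvee_{p\in P}S^{1}\Bigr)\longrightarrow \textstyle\bigvee_{p\in P}S^{1}
\]
(trivial on $S^{2}$, degree $p$ on the $p$-th circle), for which $L_{f}X\simeq K\bigl(\pi_{1}(X)/\mathrm{Tor},\,1\bigr)$. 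Applied to $\map_{*}(A,K(\integers,n+1))$ this yields $K(\integers^{k},1)$, where $\integers^{k}$ is the torsion-free part of $H^{n}(A;\integers)$. L-goodness then provides a space $Y$ with $\map_{*}(A,Y)\simeq K(\integers^{k},1)$.

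The decisive ingredient missing from your sketch is the use of cellularization: one may take $Y$ to be $A$-cellular, and then Lemma~\ref{GEMlem2} (itself resting on Theorem~\ref{fibthm} and Lemma~\ref{GEMlem}) forces $Y$ to be a GEM $\tilde\Pi_{i}K(G_{i},i)$. A short universal-coefficient computation on $\pi_{1}$ pins down $G_{n+1}\cong\integers$, so $\map_{*}(A,K(\integers,n+1))$ is a retract of $\map_{*}(A,Y)\simeq K(\integers^{k},1)$; but the assumed torsion in $H_{m}(A)$ contributes nontrivially to $\pi_{n-m}\map_{*}(A,K(\integers,n+1))$, a contradiction. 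The point you are missing conceptually is that the torsion-free hypothesis in degree $n$ is exploited to manufacture a \emph{torsion-free} target $K(\integers^{k},1)$ which rigidly constrains $Y$; working entirely over $\mathbb{F}_{p}$ discards exactly this leverage, since every homotopy group of $\map_{*}(A,K(\mathbb{F}_{p},n))$ is already $p$-torsion and no analogous ``kill torsion'' localization can separate the two contributions.
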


In order to prove this fact, we need to introduce the concept of celluarization, a generalization of CW-approximation. We say that a space $X$ is \emph{$A$-cellular} if $X$ is in the smallest class containing $A$ which is closed under homotopy colimits and weak equivalences. The space $CW_AX$ associated to a space $X$ is the $A$-cellular space closest to $X$. More precisely, we say that a map $f:X\to Y$ is an \emph{$A$-equivalence} if the induced map $$\map_*(A, X)\to\map_*(A,Y)$$ is a homotopy equivalence.
Given a space $X$, the $A$-cellular approximation of $X$ is an $A$ cellular space $CW_AX$ equipped with an $A$-equivalence $X\to CW_AX$. The assignment $X\mapsto CW_AX$ defines a functor $CW_A:\spaces_*\to\spaces_*$ \cite[2.B]{Farjoun1995}.

 Given any map $Y\to X$ where $Y$ is $A$-cellular there is a factorization
$$\begin{tikzpicture}
\matrix (m) 
[matrix of math nodes, row sep=2em, column sep=3em, text height=1.5ex, text depth=0.25ex]
{
Y &  CW_AX\\
 &   X\\
};
\path[->]
(m-1-2)  edge  (m-2-2)
(m-1-1)   edge (m-2-2)
;
\path[->, dashed]
(m-1-1) edge  (m-1-2)

;
\end{tikzpicture}$$

unique up to homotopy. The following theorem describes the relationship between cellularization and nullification.

\begin{theorem}\label{fibthm}\cite[3.B.2]{Farjoun1995}
If $[A,X]\simeq\ast$, then the sequence
$$CW_AX\to X\to P_{\Sigma A}X$$
is a fibration sequence.
\end{theorem}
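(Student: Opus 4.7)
The plan is to identify the homotopy fiber $F$ of the nullification map $X \to P_{\Sigma A}X$ with the $A$-cellular approximation $CW_A X$. I would proceed in three conceptual steps: show that $F \to X$ is an $A$-equivalence, show that the cellularization $CW_A X \to X$ factors through $F$, and show that the resulting lift $CW_A X \to F$ is a weak equivalence.

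First, applying $\map_*(A, -)$ to the fibration sequence $F \to X \to P_{\Sigma A}X$ yields a fibration of mapping spaces. Since $P_{\Sigma A}X$ is $\Sigma A$-null, the identity $\Omega\map_*(A, P_{\Sigma A}X) \simeq \map_*(\Sigma A, P_{\Sigma A}X) \simeq *$ shows that $\map_*(A, P_{\Sigma A}X)$ has contractible path components. The hypothesis $[A,X]\simeq *$ guarantees that every pointed map $A \to X$ is null-homotopic, so the image of $\map_*(A, X) \to \map_*(A, P_{\Sigma A}X)$ lies in the (contractible) basepoint component. The long exact sequence of homotopy groups for the induced fibration of mapping spaces then gives a weak equivalence $\map_*(A, F) \simeq \map_*(A, X)$, i.e., $F \to X$ is an $A$-equivalence.

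Next, I would show that the composite $CW_A X \to X \to P_{\Sigma A}X$ is null-homotopic, yielding a lift $CW_A X \to F$. Since $CW_A X$ is a homotopy colimit of copies of $A$, a global null-homotopy can be assembled from null-homotopies of each component map $A \to P_{\Sigma A}X$---available because $[A,X]\simeq *$---together with higher coherences that live in $\pi_n\map_*(A, P_{\Sigma A}X)$ for $n \geq 1$ and thus vanish by the discreteness established above. Combining this with the first step via two-out-of-three shows that the lift $CW_A X \to F$ is itself an $A$-equivalence. I would then identify $CW_A X \to F$ as a weak equivalence by verifying directly that $F$ satisfies the universal property of $A$-cellularization: for any $A$-cellular $Y$, an analogous homotopy-discreteness argument applied to $\map_*(Y, P_{\Sigma A}X)$ forces it to be contractible, so applying $\map_*(Y, -)$ to the defining fibration produces $\map_*(Y, F) \simeq \map_*(Y, X)$, showing that $F \to X$ is the $A$-cellularization and hence $F \simeq CW_A X$.

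I expect the main obstacle to be the coherent assembly of null-homotopies in the second step (equivalently, the cellularity of $F$). The argument essentially uses $[A,X]\simeq *$ to remove the $\pi_0$ obstruction, while the $\Sigma A$-null property of $P_{\Sigma A}X$ kills all higher obstructions. The technical care required lies in handling the transfinite cone-attachment construction of $P_{\Sigma A}X$ uniformly across all the coherences, which is where Farjoun's full proof in \cite{Farjoun1995} is most delicate.
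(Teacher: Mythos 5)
The paper itself gives no proof of this statement; it is imported verbatim from Farjoun (cited as 3.B.2), so your proposal has to stand on its own. Its first two steps are essentially sound. For step one, note that $\map_*(\Sigma A, P_{\Sigma A}X)\simeq\Omega\map_*(A,P_{\Sigma A}X)$ only controls the component of the constant map, not all components; but that is the only component you need, since $[A,X]\simeq\ast$ forces the image of $\map_*(A,X)\to\map_*(A,P_{\Sigma A}X)$ into it, and the fibration argument then gives that $F\to X$ is an $A$-equivalence. For step two, the null-homotopy of the composite $CW_AX\to X\to P_{\Sigma A}X$ can indeed be produced by induction over the cellular construction of $CW_AX$: for every $A$-cellular $Y$ the space $\map_*(Y,P_{\Sigma A}X)$ is homotopically discrete (because $\Sigma Y$ is $\Sigma A$-cellular), and the restriction of the composite to each stage factors through $X$, hence is null by hypothesis; your sketch is hand-wavy but the idea is correct, and two-out-of-three then makes the lift $CW_AX\to F$ an $A$-equivalence.

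The genuine gap is in the final identification. First, homotopy discreteness of $\map_*(Y,P_{\Sigma A}X)$ does not force contractibility: that would require $[Y,P_{\Sigma A}X]=\ast$, and the hypothesis $[A,X]\simeq\ast$ only kills classes that factor through $X$, whereas $P_{\Sigma A}X$ is built by coning off maps from suspensions of $A$ and may support essential maps from $A$-cellular spaces; in effect you are asserting without argument that $P_{\Sigma A}X$ is $A$-null. Second, and more seriously, even granting that $\map_*(Y,F)\to\map_*(Y,X)$ is a weak equivalence for every $A$-cellular $Y$, this is not the universal property of cellularization: the same condition is satisfied by $\mathrm{id}_X\colon X\to X$ (indeed by any $A$-equivalence into $X$), so it cannot single out $CW_AX$. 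What characterizes $CW_AX\to X$ is that it is an $A$-equivalence from an $A$-cellular space; hence the step you still owe is precisely that the homotopy fiber $F$ is $A$-cellular, equivalently that your $A$-equivalence $CW_AX\to F$ is a weak equivalence (Whitehead theorem for $A$-cellular spaces). That cellularity of the fiber is the real content of Farjoun's theorem and is where his proof does its work; your proposed universal-property argument circles around it without closing it.
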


Recall that Generalized Eilenberg-Maclane spaces (GEMs) are spaces weakly equivalent to a product of Eilenberg-Maclane spaces $\tilde{\Pi}_n K(G_n,n)$. Localization and cellularization preserves GEMs in particular cases.

\begin{lemma}\label{GEMlem}\cite[5.B.1.1]{Farjoun1995}
If $P_{\Sigma A}X\simeq\ast$ then $P_{\Sigma^2 A}X$ is a GEM.
\end{lemma}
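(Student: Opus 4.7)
My plan is to study $Y := P_{\Sigma^2 A} X$ and show its Postnikov tower splits, realizing it as a product of Eilenberg--MacLane spaces. The argument would combine the $\Sigma^2 A$-nullity built into $P_{\Sigma^2 A}$ with a ``$\Sigma A$-acyclicity'' transferred from $X$, and then play these two conditions against each other.

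The first step is to observe that $P_{\Sigma A} Y \simeq *$. Since $\map_*(\Sigma^2 A, Z) \simeq \Omega\map_*(\Sigma A, Z)$ for any $Z$, every $\Sigma A$-null space is automatically $\Sigma^2 A$-null, so the localization $P_{\Sigma A}$ factors through $P_{\Sigma^2 A}$. This yields
$$P_{\Sigma A} Y \simeq P_{\Sigma A} P_{\Sigma^2 A} X \simeq P_{\Sigma A} X \simeq *.$$
Now $Y$ is simultaneously $\Sigma^2 A$-null and $\Sigma A$-acyclic. Applying Theorem \ref{fibthm} (with $\Sigma A$ in place of $A$, after verifying the relevant null-map hypothesis from the $\Sigma^2 A$-nullity) presents $Y$ as the total space of a fibration whose base is contractible, so $Y \simeq CW_{\Sigma A} Y$ is $\Sigma A$-cellular.

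Next I would use $\Sigma^2 A$-nullity to pin down the mapping spaces into $Y$. The equivalence $\Omega^2 \map_*(A, Y) \simeq *$ forces the path components of $\map_*(A, Y)$ to be aspherical, and more importantly forces every Whitehead-style bracket of maps $\Sigma A \to Y$ to be null (a double suspension witnesses the commutativity of such operations). Thus the only homotopy operations on $Y$ detectable through $A$-cells are additive.

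Finally, I would run an induction up the Postnikov tower of $Y$: assuming $Y[n-1]$ is a GEM, the $k$-invariant $k_n \in H^{n+1}(Y[n-1], \pi_n Y)$ must vanish, whence $Y[n]\simeq Y[n-1] \times K(\pi_n Y, n)$, and in the limit $Y$ is a GEM. The main obstacle is this last step: rigorously showing that $k_n$ is entirely detected through the $\Sigma A$-cellular structure of $Y$ --- so that $\Sigma^2 A$-nullity trivializes it. This is the technical heart of the lemma, where the two hypotheses must interlock through a double-suspension commutativity argument, and is precisely where Farjoun's proof in \cite[5.B]{Farjoun1995} invests most of its work.
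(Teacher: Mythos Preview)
The paper does not give its own proof of this lemma; it is simply quoted from Farjoun \cite[5.B.1.1]{Farjoun1995}. Your outline is broadly in the spirit of Farjoun's argument there, and you acknowledge as much in your final paragraph. Two comments on the sketch itself.

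First, a concrete gap: you invoke Theorem~\ref{fibthm} with $\Sigma A$ in place of $A$, claiming the hypothesis $[\Sigma A, Y]\simeq *$ can be verified ``from the $\Sigma^2 A$-nullity.'' But $\Sigma^2 A$-nullity of $Y$ gives only $\map_*(\Sigma^2 A, Y)\simeq \Omega\,\map_*(\Sigma A, Y)\simeq *$, which kills $\pi_i\map_*(\Sigma A, Y)$ for $i\geq 1$ and says nothing about $\pi_0\map_*(\Sigma A, Y)=[\Sigma A, Y]$. So the hypothesis of Theorem~\ref{fibthm} as stated in this paper is not available from that source; you would need a separate argument, or the more general form of the cellular--nullification fibration that Farjoun proves without that pointed-set condition.

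Second, the last step --- arguing that $\Sigma A$-cellularity together with $\Sigma^2 A$-nullity forces the Postnikov $k$-invariants of $Y$ to vanish --- is where essentially all the content lies, and your treatment of it is a promissory note pointing back to \cite[5.B]{Farjoun1995}. That is not unreasonable given that the paper itself simply cites the result, but it means your proposal is a roadmap to Farjoun's proof rather than an independent argument. The informal appeal to ``Whitehead-style brackets being null because a double suspension witnesses commutativity'' is suggestive but not yet a proof; Farjoun's actual mechanism runs through a principal-fibration and loop-structure analysis that your sketch does not reproduce.
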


\begin{lemma}\label{GEMlem2}
Let $A$, $X$ be connected spaces such that $\map_*(A,X)\simeq K(G,1)$ for some group $G$, then $CW_A(X)$ is a GEM.
\end{lemma}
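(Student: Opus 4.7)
The plan is to apply Theorem \ref{fibthm} and Lemma \ref{GEMlem} directly to $Y := CW_A X$. First, since the cellularization map $Y \to X$ is an $A$-equivalence, one has $\map_*(A, Y) \simeq \map_*(A, X) \simeq K(G, 1)$. As $K(G, 1)$ is connected, $[A, Y]_* = \pi_0 \map_*(A, Y) = \ast$, which is precisely the hypothesis of Theorem \ref{fibthm} with $X$ replaced by $Y$. Invoking it yields a fibration sequence
$$CW_A Y \to Y \to P_{\Sigma A} Y.$$
Now $Y$ is $A$-cellular by construction, so $CW_A Y \to Y$ is a weak equivalence, and the long exact sequence of the fibration forces $P_{\Sigma A} Y \simeq \ast$.

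The second step is to apply Lemma \ref{GEMlem} to $Y$: since $P_{\Sigma A} Y \simeq \ast$, the space $P_{\Sigma^2 A} Y$ is a GEM. To finish, it suffices to show that $Y$ is already $\Sigma^2 A$-null, so that the coaugmentation $Y \to P_{\Sigma^2 A} Y$ is a weak equivalence. Using the loop--suspension adjunction $\map_*(\Sigma^k A, Y) \simeq \Omega^k \map_*(A, Y)$,
$$\map_*(\Sigma^2 A, Y) \simeq \Omega^2 K(G, 1) \simeq \ast,$$
the last equivalence holding because $\Omega K(G, 1) \simeq G$ is a discrete group. Thus $Y \simeq P_{\Sigma^2 A} Y$, and $CW_A X = Y$ is a GEM.

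The only delicate point is the appeal to Theorem \ref{fibthm} after cellularization, which hinges on the trivial observation that $K(G, 1)$ is path-connected; everything else is a direct assembly of the cited results with the standard $\Omega$--$\Sigma$ adjunction, so I do not anticipate a serious obstacle.
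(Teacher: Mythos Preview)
Your proof is correct and follows essentially the same route as the paper's: apply Theorem~\ref{fibthm} to $CW_A X$ (using that $\pi_0\map_*(A,CW_AX)=\ast$), deduce $P_{\Sigma A}CW_AX\simeq\ast$ from $CW_A CW_A X\simeq CW_A X$, invoke Lemma~\ref{GEMlem}, and then check $CW_AX$ is $\Sigma^2A$-null via $\Omega^2K(G,1)\simeq\ast$. The only cosmetic difference is that you make the $A$-equivalence $\map_*(A,CW_AX)\simeq\map_*(A,X)$ explicit before using it, which the paper leaves implicit.
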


\begin{proof}
Since $[A, CW_AX]\isom[A,X]=\pi_0(\map_*(A,X))=\ast$, by Theorem \ref{fibthm} we have a fibration sequence
$$CW_ACW_AX\to CW_AX\to P_{\Sigma A} CW_AX.$$
The map $CW_ACW_AX\to CW_AX$ is a weak equivalence, which gives $P_{\Sigma A} CW_AX\simeq\ast$ and by Lemma \ref{GEMlem} $P_{\Sigma^2 A} CW_AX$ is a GEM. It then suffices to show $CW_AX\simeq P_{\Sigma^2 A} CW_AX$, or equivalently that $CW_AX$ is a $\Sigma^2A$-null space. This holds since
$$\map_*(\Sigma^2A,CW_A X)\simeq\Omega^2\map_*(A,X)\simeq\Omega^2 K(G,1)\simeq\ast.$$

\end{proof}

\begin{proof} [Proof of Proposition \ref{torsion}]

Assume that $A$ is a finite, pointed, connected $CW$-complex such that $H_n(A,\integers)$ contains a torsion-free element and that there exists $0<m<n$ such that $H_m(A,\integers)\neq 0$. For the sake of contradiction, suppose $A$ is $L$-good, then by Theorem \ref{rationaltype}, $H_m(A;\integers)$ must be torsion.

Consider the map $f:S^2\vee(\bigvee_{p\in P}S^1)\to (\bigvee_{p\in P}S^1)$, where $P$ is the set of all primes, such that $f|_{S^2}=\ast$ and $f|_{S^1_p}:S^1_p\to S^1_p$ is the degree $p$ map $S^1\to S^1$. The localization $L_fX$ of a space $X$ has trivial homotopy groups $\pi_i(L_fX)$ for $i>1$ and has no torsion in the fundamental group and so $$L_fX=K(\pi_1(X)/\mathrm{Tor}(\pi_1(X)),1).$$ We will show that $A$ is not $L$-good with respect to the map $f$. Indeed, otherwise there would exist a space $Y$ (which can be assumed to be $A$-cellular) such that 
$$L_f\map_*(A, K(\integers, n+1))\simeq \map_*(A,Y).$$
However, $L_f\map_*(A, K(\integers, n+1))\simeq K(H^n(A;\integers)/\mathrm{Tor}(H^n(A;\integers)),1)$. By assumption, $H_n(A,\integers)$ has a nontrivial torsion-free part so $H_n(A,\integers)\isom \integers^k\oplus \mathrm{Tor}(H_n(A,\integers))$ for some $k>0$. By the universal coefficient theorem, the torsion-free part of $H^n(A;\integers)$ is isomorphic to the torsion-free part of $H_n(A,\integers)$. This gives
$$\map_*(A,Y)\simeq K(\integers^k,1).$$
By Lemma \ref{GEMlem2} we obtain, $Y\simeq\tilde{\Pi}_{i=1}^{\infty}K(G_i,i)$ and so
$$\map_*(A,\tilde{\Pi}_{i}K(G_i,i))\simeq K(\integers^k,1).$$
Restricting our attention to the fundamental groups, we get $\oplus_i\tilde{H}^{i-1}(A,G_i)\isom\integers^k$ where each of the summands on the left side is torsion, hence trivial, with the exception of $i=n+1$. We obtain
$$\integers^k\isom H^n(A; G_{n+1})\isom\hom(H_n(A,\integers), G_{n+1})\oplus \ext(H_{n-1}(A,\integers),G_{n+1})$$
which implies
$$\integers^k\isom \hom(H_n(A,\integers), G_{n+1})$$
as $\ext(H_{n-1}(A,\integers),G_{n+1})$ is torsion.  Note that $\hom(H_n(A,\integers), G_{n+1})=\hom(\integers^k\oplus Tor(H_n(A,\integers)),G_{n+1})$ and so $\integers^k\isom \bigoplus_{i=1}^k G_{n+1}\oplus(\mathrm{torsion})$ which implies $G_{n+1}\isom\integers$.

Now, $H_m(A,\integers)$ is torsion, so by the universal coefficient theorem it is a direct summand of $H^{m+1}(A,\integers)$ so we have
$$H_m(A,\integers)\subseteq H^{m+1}(A,\integers)\isom\pi_{n-m}\map_*(A,K(\integers,n+1)).$$
Since $G_{n+1}\isom\integers$, the space $\map_*(A, K(\integers, n+1))$ is a retract of $\map_*(A,Y)$ and so 
$$H_m(A,\integers)\subseteq \pi_{n-m}(\map_*(A,Y))=\pi_{n-m}(K(\integers^k,1))$$
which is a contradiction. This shows that the space $Y$ cannot exist and so $A$ is not L-good.
\end{proof}

\begin{proposition}
Let $A$ be a finite, pointed, connected CW-complex. If $H_n(A,\integers)\isom\integers^k\oplus T$ for $k>0$ and $T$ is a non-trivial torsion group, then $A$ is not $L$-good.
\end{proposition}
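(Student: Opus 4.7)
The plan is to adapt the template of the proof of Proposition \ref{torsion}, replacing its $\pi_{n-m}$-level contradiction with a $\pi_0$-level one that exposes the torsion summand $T\subseteq H_n(A,\integers)$. First, I would suppose for contradiction that $A$ is $L$-good. Since $H_n(A,\integers)$ contains a nontrivial torsion-free summand $\integers^k$, Proposition \ref{torsion} reduces the situation to $H_q(A,\integers)=0$ for all $0<q<n$; then the universal coefficient theorem gives $H^n(A;\integers)\isom\integers^k$.

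Next, applying the same map $f$ used in the proof of Proposition \ref{torsion}, that proof's calculation yields $L_f\map_*(A,K(\integers,n+1))\simeq K(\integers^k,1)$. By $L$-goodness this is weakly equivalent to $\map_*(A,Y)$ for some $A$-cellular space $Y$, and Lemma \ref{GEMlem2} forces $Y$ to be a GEM, $Y\simeq\tilde{\Pi}_{i\geq 1}K(G_i,i)$. The $\pi_1$-matching argument from that earlier proof still gives $G_{n+1}\isom\integers$.

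The key new step is to exploit the resulting retract at the level of $\pi_0$. Since $K(\integers,n+1)$ is a direct factor of $Y$, the mapping space $\map_*(A,K(\integers,n+1))$ is a direct factor of $\map_*(A,Y)\simeq K(\integers^k,1)$. The latter is connected, so passing to $\pi_0$ produces an injection
$$H^{n+1}(A;\integers)=\pi_0\map_*(A,K(\integers,n+1))\into 0.$$
By universal coefficients $\ext(H_n(A,\integers),\integers)\isom T$ is a direct summand of $H^{n+1}(A;\integers)$ (using that $T$ is finite, as $A$ is a finite CW-complex), so $H^{n+1}(A;\integers)\supseteq T\neq 0$. A nontrivial group cannot inject into the trivial group, and this contradiction shows $A$ is not $L$-good.

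The main obstacle I anticipate is the opening $L_f$-computation: since $H^{n+1}(A;\integers)$ is already nonzero, the mapping space $\map_*(A,K(\integers,n+1))$ is disconnected before localization, so one must check that the formula $L_fX\simeq K(\pi_1(X)/\mathrm{Tor}(\pi_1(X)),1)$ used in Proposition \ref{torsion}'s proof still applies here. However, the hypotheses of that proposition likewise permit torsion in $H_n$, so the same calculation should transfer directly.
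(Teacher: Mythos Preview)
Your proposal is correct and essentially identical to the paper's proof: both rerun the argument of Proposition~\ref{torsion} with the same map $f$ to obtain $\map_*(A,Y)\simeq K(\integers^k,1)$ with $G_{n+1}\isom\integers$, and then extract the contradiction at the $\pi_0$-level by embedding $T\subseteq H^{n+1}(A;\integers)=\pi_0\map_*(A,K(\integers,n+1))$ into $\pi_0\map_*(A,Y)=0$ via the retract. The disconnectedness concern you flag is legitimate but, as you note, already present in the proof of Proposition~\ref{torsion} and handled the same way here.
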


\begin{proof}
We can use the same argument as the previous proposition to show that if $f$ is the map as in the proof of Proposition \ref{torsion} and $L_f\map_*(A, K(\integers, n+1))\simeq\map_*(A, Y)$ then
$$T\subseteq\pi_0\map_*(A,K(\integers,n+1))\subseteq\pi_0(\map_*(A,Y))=\pi_0(K(\integers^k,1))$$
which is a contradiction.
\end{proof}

\begin{corollary}\label{homcor}
If $A$ is a finite, pointed, connected, CW-complex that is $L$-good and $H_n(A,\integers)$ contains a torsion-free element for some $n>0$, then
$$H_q(A;\integers)\isom\left\{\begin{array}{c c}
0 & \text{ if }0<q<n\\
\integers^k & \text{ if }q=n\\
\mathrm{torsion} & \text{ if } q>n\\
\end{array}\right.$$
\end{corollary}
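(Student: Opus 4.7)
The plan is to read off all three cases of the corollary directly from Proposition \ref{torsion} together with the immediately preceding proposition (which rules out mixing a torsion-free summand with nontrivial torsion in the same degree). All the genuine topological work has already been done there; the corollary is essentially a bookkeeping consequence.

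First, the vanishing $H_q(A,\integers)=0$ for $0<q<n$ is exactly the conclusion of Proposition \ref{torsion}, applied to the given $n$ in which $H_n(A,\integers)$ contains a torsion-free element.

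Second, for $q=n$, I would write $H_n(A,\integers)\isom\integers^k\oplus T$ where $k\geq 1$ by hypothesis and $T$ denotes the torsion subgroup. If $T$ were nontrivial, then the preceding proposition would say that $A$ is not $L$-good, contradicting our assumption. Hence $T=0$ and $H_n(A,\integers)\isom\integers^k$.

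Third, for $q>n$, I would argue by contradiction: suppose $H_q(A,\integers)$ contained a torsion-free element. Then Proposition \ref{torsion}, applied with $q$ in place of $n$, would force $H_m(A,\integers)=0$ for all $0<m<q$. Since $n<q$, this would give $H_n(A,\integers)=0$, contradicting $H_n(A,\integers)\isom\integers^k\neq 0$. So $H_q(A,\integers)$ must be entirely torsion. There is no real obstacle here; the only thing to check is that Proposition \ref{torsion} is legitimately reusable in different degrees, which it is, since its hypothesis refers only to the existence of a torsion-free class in whatever degree we plug in.
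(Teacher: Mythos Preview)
Your proposal is correct and matches the paper's intent: the corollary is stated without proof there, as an immediate bookkeeping consequence of Proposition~\ref{torsion} and the unnumbered proposition just before it, exactly as you lay out. The only implicit fact you use without comment is that $H_n(A,\integers)$ is finitely generated (so it splits as $\integers^k\oplus T$), which follows from $A$ being a finite CW-complex.
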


\section{Commutation of Localization}
\label{COMMUTATION SEC}

By definition, if $A$ is an $L$-good space then the class of mapping spaces $\map_*(A,X)$ is preserved by localization functors. If $A=S^n$, the homotopy type of the space $L_f\map_*(A,X)$ can be described more precisely as follows:
\begin{theorem}\label{LoopComm}\cite[3.1]{Bousfield1994} Let $f:Z\to W$ be any map in $\spaces_*$ and $X$ a pointed space. The natural map
$$L_f\map_*(S^n,X)\to\map_*(S^n, L_{S^n\wedge f}X)$$
is a homotopy equivalence.
\end{theorem}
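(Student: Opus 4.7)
The plan is to prove this by induction on $n$, taking as the base case the Farjoun commutation $L_f\Omega X\simeq \Omega L_{\Sigma f}X$ recalled just before the theorem. The key preliminary observation is that the target $\map_*(S^n,L_{S^n\wedge f}X)$ is already $f$-local: the smash--loop adjunction identifies the map $f^*$ between mapping spaces out of this target with the map $(S^n\wedge f)^*$ applied to $L_{S^n\wedge f}X$, which is a weak equivalence by construction of the localization. Hence the natural map in the statement exists and is uniquely determined up to homotopy, as the factorization of $\map_*(S^n,X)\to\map_*(S^n,L_{S^n\wedge f}X)$ through the $f$-localization.

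For the induction the case $n=1$ is Farjoun's result itself. Assuming the statement for $n-1$ with an arbitrary map $g$ and space $Y$, I would write $\map_*(S^n,X)\simeq\Omega\map_*(S^{n-1},X)$ and apply Farjoun's equivalence with $\map_*(S^{n-1},X)$ in place of $X$ to obtain
$$L_f\map_*(S^n,X)\simeq \Omega L_{\Sigma f}\map_*(S^{n-1},X).$$
Invoking the inductive hypothesis with $g=\Sigma f$, and noting $S^{n-1}\wedge\Sigma f=S^n\wedge f$, rewrites the right-hand side as $\Omega\map_*(S^{n-1},L_{S^n\wedge f}X)\simeq\map_*(S^n,L_{S^n\wedge f}X)$. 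The $f$-locality of the target established in the first paragraph then promotes this abstract chain of equivalences to the claim about the natural map, by uniqueness of the $f$-localization.

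The main obstacle is not the chain of equivalences itself but the bookkeeping required to verify that each step is compatible with the relevant coaugmentation maps, so that the composite equivalence really factors the canonical map $\map_*(S^n,X)\to\map_*(S^n,L_{S^n\wedge f}X)$ rather than merely existing abstractly in the homotopy category. In each case this reduces to checking that Farjoun's result and the inductive hypothesis are invoked in their natural (structure-map-compatible) form; the universal property of $L_f$ then closes the argument.
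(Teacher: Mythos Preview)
The paper does not give its own proof of this statement; it is quoted from \cite[3.1]{Bousfield1994} and used as motivation. So there is no in-paper argument to compare against directly. That said, the paper does indicate what Bousfield's proof looks like: in Section~\ref{MAIN THM SEC} it proves the generalization (Theorem~\ref{detectableLgood}) by an argument it explicitly says ``parallels Bousfield's argument,'' and that argument is \emph{not} inductive. Instead one shows that $\tilde B_{\mathrm{der}} L_f \tilde\Omega_{\mathrm{der}}$ and $L_{A\wedge f}$ are coaugmented idempotent endofunctors of $\ho R^{A}\top_{\ast}$ with the same image (via Lemma~\ref{flocallem}), and then invokes Lemma~\ref{idempotentfunctorlem} to conclude they are naturally isomorphic; applying $\tilde\Omega_{\mathrm{der}}$ and passing to underlying spaces yields the equivalence. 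For $A=S^{n}$ this specializes to the theorem in question.

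Your inductive route is a genuinely different and perfectly valid approach. It trades the single conceptual step (two idempotent localizations with the same local objects must agree) for an iteration of the $n=1$ case, using $\map_{\ast}(S^{n},-)\simeq\Omega\map_{\ast}(S^{n-1},-)$ and $S^{n-1}\wedge\Sigma f\simeq S^{n}\wedge f$. The advantage of your method is that it needs nothing beyond Farjoun's $L_f\Omega\simeq\Omega L_{\Sigma f}$ and the smash--loop adjunction; the disadvantage is exactly the one you flag, namely the coaugmentation bookkeeping, and also that it is specific to spheres and does not obviously generalize to other detectable $A$. The paper's (Bousfield's) approach handles the naturality for free via Lemma~\ref{idempotentfunctorlem} and works uniformly for any $A$ for which the Quillen pair $(\tilde B,\tilde\Omega)$ exists.
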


This result motivates the following definition:

\begin{definition}
A space $A$ is \emph{strongly L-good} if for any space $X$ and any map $f$, there exists a map $g$ such that $$L_f(\map_*(A,X))\simeq \map_*(A,L_g(X)).$$
\end{definition}

We will show that the following holds:

\begin{theorem}\label{FirstThm}
If $A$ is a finite CW complex that is strongly $L$-good, then 
$$H_*(A)\isom H_*\left(\bigvee_lS^n\right)$$ for some $n\ge 1$, $l\ge 0$.
\end{theorem}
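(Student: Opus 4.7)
Since strongly $L$-good implies $L$-good, Corollary~\ref{homcor} already does the bulk of the work: if $A$ has a torsion-free element in $H_n(A,\integers)$ for the smallest such $n>0$, then $H_q(A,\integers)$ vanishes for $0<q<n$, equals $\integers^k$ at $q=n$, and is torsion for $q>n$. So the theorem reduces to two claims. First, in this case I must show that all torsion in degrees $q>n$ vanishes. Second, I must handle the case where $A$ has only torsion in positive degrees, showing that this forces $A$ to have trivial reduced homology (the $l=0$ case). The extra strength of strongly $L$-good — the identification $L_f\map_*(A,X)\simeq\map_*(A,L_gX)$ for a bona-fide localization of $X$, not merely any space — is what drives both arguments.

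\textbf{The key mechanism.} My plan is to imitate the template of Proposition~\ref{torsion}, but now with the privileged target space on the right being a localization $L_gK(\pi,m)$ of an Eilenberg--MacLane space. Given a well-chosen $f$ that kills higher homotopy and torsion in $\pi_1$ (e.g., the map $S^2\vee(\bigvee_p S^1)\to\bigvee_p S^1$ used in the proof of Proposition~\ref{torsion}), I obtain $L_f\map_*(A,X)\simeq\map_*(A,L_gX)$, and Lemma~\ref{GEMlem2} applied to $CW_A(L_gX)$ pins the $A$-cellularization down to a GEM $\tilde\Pi_i K(G_i,i)$. A universal coefficient computation of $\pi_*\map_*(A,\tilde\Pi_i K(G_i,i))$ then converts hypothetical torsion in $H_q(A)$ into homotopy constraints on the $G_i$'s; the point is that the shape of $L_f\map_*(A,X)$, which I can compute directly from cohomology of $A$, is too rigid to accommodate these extra torsion contributions.

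\textbf{Steps.} First, suppose $H_q(A,\integers)$ contains nontrivial $p$-torsion for some $q>n$. I would take $X=K(\integers/p,q+1)$, apply an $f$ engineered to isolate the $1$-type of $\map_*(A,X)$, and identify both sides. The strongly $L$-good relation, together with the GEM structure of $CW_A(L_gX)$ and UCT, forces a specific shape on the collection $(G_i)_i$ which is incompatible with torsion appearing above degree $n$ — essentially because after accounting for the rank-$k$ piece in degree $1$ that lands in $G_{n+1}\cong\integers$, the torsion in $H_q$ has nowhere to go. Second, for the purely-torsion case, the same strategy with $X=K(\integers,m+1)$ (or $K(\integers/p,m+1)$) and $m$ the lowest degree with nontrivial torsion yields a contradiction unless the torsion is zero. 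Finally, if $A$ has no positive-degree homology at all, then $H_*(A)\isom H_*(\bigvee_0 S^n)$ trivially and we are done.

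\textbf{Main obstacle.} The hardest part will be the bookkeeping in step one: one must simultaneously control (a) the precise cohomological content of $L_f\map_*(A,X)$ computed directly from $H^*(A,\integers/p)$ and $f$, (b) the GEM decomposition of $CW_A(L_gX)$, and (c) the UCT splittings that identify the $\pi_j$ of $\map_*(A,\tilde\Pi_i K(G_i,i))$ with sums of $\hom$ and $\ext$ terms, so as to show no choice of $(G_i)$ accommodates a nonzero torsion summand above degree $n$. Making these three pictures compatible with the \emph{specific} $g$ supplied by strong $L$-goodness — rather than an arbitrary $Y$ as in the weaker setting — is exactly where the strengthened hypothesis is exploited, and is the technical heart of the proof.
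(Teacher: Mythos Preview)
Your outline misses the key simplification that the paper exploits, and the specific choices you propose do not lead to a contradiction.

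The paper does not recycle the GEM machinery of Section~\ref{LGOOD SEC}. Instead it uses Proposition~\ref{EMcor}: any localization $L_gK(\integers,N)$ is again an Eilenberg--MacLane space $K(G,N)$ in the \emph{same} degree. This is precisely where strong $L$-goodness pays off over mere $L$-goodness---the target is not just some $A$-cellular GEM, but a single $K(G,N)$. Concretely, the paper takes $m>n$ to be the \emph{largest} degree with $H_m(A)\neq 0$, sets $X=K(\integers,n+m+1)$, and applies the nullification $P_{S^{n+1}}$ (not the map from Proposition~\ref{torsion}). A direct UCT computation gives $P_{S^{n+1}}\map_*(A,X)\simeq K(H_m(A),n)$. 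Strong $L$-goodness and Proposition~\ref{EMcor} then yield $K(H_m(A),n)\simeq\map_*(A,K(G,n+m+1))$ for some $G$. Reading off $\pi_{m+1}$ of both sides gives $0\cong H^n(A;G)\cong\hom(\integers^l,G)$, so $G=0$, forcing $H_m(A)=0$---contradiction. No GEM bookkeeping, no $CW_A$, no $\ext$ chasing.

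Your proposed choices actually collapse. With $X=K(\integers/p,q+1)$ and the $f$ of Proposition~\ref{torsion}, one gets $L_f\map_*(A,X)=K(H^q(A;\integers/p)/\mathrm{Tor},1)\simeq *$, since $H^q(A;\integers/p)$ is entirely $p$-torsion; this says only that $L_gX$ is $A$-null, which is no contradiction. The sentence about ``the rank-$k$ piece in degree $1$ that lands in $G_{n+1}\cong\integers$'' is inconsistent with that choice of $X$. Moreover, invoking Lemma~\ref{GEMlem2} for $CW_A(L_gX)$ is unnecessary once you know $L_gX$ is already an Eilenberg--MacLane space; you are working harder than needed and in a direction that does not close. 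You do correctly flag the purely-torsion case as needing attention, but the heart of your plan---rerunning the Section~\ref{LGOOD SEC} GEM argument with $K(\integers/p,q+1)$---does not go through as written.
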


We will be utilize the following property of localizations of Eilenberg-Maclane Spaces.

\begin{proposition}\label{EMcor}\cite[3.1]{Casacuberta1998}
Let $f$ be any map and $n\ge 1$, then
$$L_f(K(\integers,n))\simeq K(G,n)$$
for some abelian group $G$.
\end{proposition}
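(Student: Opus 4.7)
The plan is to leverage that $K(\integers,n)$ is naturally a topological abelian group, together with Bousfield's loop-commutation theorem \ref{LoopComm}. First, observe that $K(\integers,n)$ is the underlying space of a simplicial abelian group, so in particular it is a commutative topological group. Since the localization functor $L_f$ is constructed as a left Bousfield localization of $\sSet_*$, it preserves finite products up to weak equivalence, and by the rigidification machinery available for simplicial abelian groups, $L_f K(\integers,n)$ can be lifted to a simplicial abelian group. By the Dold--Kan correspondence (equivalently, the classical fact that every topological abelian group is weakly equivalent to a generalized Eilenberg--MacLane space), one obtains $L_f K(\integers,n)\simeq\prod_k K(G_k,k)$ for some sequence of abelian groups $G_k$.

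The next task is to show $G_k=0$ for $k\neq n$. For $k>n$, apply Theorem \ref{LoopComm} iteratively: since $K(\integers,n)\simeq\Omega^j K(\integers,n+j)$, we have $L_f K(\integers,n)\simeq\Omega^j L_{\Sigma^j f}K(\integers,n+j)$ for all $j\ge 0$, exhibiting $L_f K(\integers,n)$ as an arbitrarily deloopable GEM; an inductive analysis of the resulting compatibility conditions on the factors $K(G_k,k)$ under looping then forces $G_k=0$ for $k>n$. For $k<n$, exploit the $(n-1)$-connectivity of $K(\integers,n)$: the GEM factorization $L_f K(\integers,n)\simeq\prod_k K(G_k,k)$ combined with the Postnikov tower of the localization map $K(\integers,n)\to L_f K(\integers,n)$ should show that the low-degree factors are controlled by the (trivial) low-degree structure of the source, after accounting for the shifts introduced by the loop commutation.

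The main obstacle will be this concentration argument: preservation of the abelian-group structure only produces a GEM, whereas we want homotopy concentrated in a single degree. Handling $k>n$ cleanly requires carefully exploiting the loop-space commutation under iterated suspensions of $f$, and handling $k<n$ requires a subtle connectivity argument since $L_f$ does not in general preserve connectivity. An alternative, possibly cleaner, route is to observe that $K(\integers,n)$ is an $H\integers$-module at the level of spectra, that $L_f$ preserves this module structure, and that the class of $H\integers$-modules with homotopy concentrated in a single degree is closed under localization, which then identifies $L_f K(\integers,n)$ as $K(G,n)$ with $G$ the appropriate algebraic $f$-localization of $\integers$.
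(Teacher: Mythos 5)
First, a point of comparison: the paper does not prove this proposition at all -- it is quoted from Casacuberta \cite[3.1]{Casacuberta1998} -- so your attempt has to be judged against the literature rather than against an argument in the text. Your first step (that $L_f$ of a GEM is a GEM, hence $L_fK(\integers,n)\simeq\prod_k K(G_k,k)$) is a quotable theorem (see \cite[4.B]{Farjoun1995}), and the $k<n$ part can be made precise along the lines you gesture at: each factor $K(G_k,k)$ is a retract of the local space and hence $f$-local, the composite $K(\integers,n)\to L_fK(\integers,n)\to K(G_k,k)$ is a class in $H^k(K(\integers,n);G_k)=0$ for $0<k<n$, and the universal property of the coaugmentation then forces the projection to be null, so $G_k=0$. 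The genuine gap is the case $k>n$, which is where the whole content of the proposition sits. The mechanism you propose -- iterate Theorem \ref{LoopComm} to write $L_fK(\integers,n)\simeq\Omega^jL_{\Sigma^jf}K(\integers,n+j)$ and let ``compatibility under looping'' kill the high factors -- yields no constraint at all: every infinite loop space, in particular every GEM, is arbitrarily deloopable with all the compatibility one could ask for and can have homotopy in every dimension, and the vanishing argument that worked below degree $n$ fails above it because $H^k(K(\integers,n);G_k)$ for $k>n$ is populated by cohomology operations, so the projection to a high factor need not be null for formal reasons.

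A decisive sign that the sketch cannot be completed as written is that nothing in it uses $\integers$ beyond its being abelian: the same words would ``prove'' $L_fK(G,n)\simeq K(G',n)$ for every abelian $G$, and that statement is false. For divisible torsion groups such as $G=\integers/p^{\infty}$ there are $f$-localizations of $p$-completion type (e.g.\ nullification at a Moore space $M(\integers[1/p],1)$) under which $K(G,n)$ becomes a $K(\hat{\integers}_p,n+1)$; in general one only gets a GEM with homotopy in dimensions $n$ and $n+1$, which is precisely why the result is stated for $K(\integers,n)$. The same objection hits your ``alternative route'': the claim that $H\integers$-module spaces with homotopy concentrated in a single degree are closed under $L_f$ is not a known fact you may cite -- it is a false-in-general strengthening of the very statement to be proved. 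To close the gap you need an argument for $k>n$ that genuinely exploits that $\integers$ is free (finitely generated), e.g.\ the analysis of localizations of abelian Eilenberg--MacLane spaces carried out in the cited reference, rather than deloopability or module structure alone.
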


\begin{proof} [Proof of Theorem \ref{FirstThm}]

We will argue by contradiction. Assume that $A$ is strongly $L$-good, then by Corollary \ref{homcor}, we have 
$$H_q(A)\isom\left\{\begin{array}{c c}
0 & \text{ if }0<q<n\\
\integers^l & \text{ if }q=n\\
\mathrm{torsion} & \text{ if } q>n\\
\end{array}\right..$$

Assume that $H_q(A)\neq 0$ for some $q>n$ and let $m>n$ be the largest integer such that $H_m(A)\neq 0$.

By the universal coefficients theorem, we get
$$\pi_i\map_*(A,K(\integers,n+m+1))\isom \hom(H_{n+m+1-i}(A);\integers)\oplus \ext(H_{n+m-i}(A),\integers)$$ 
and our assumptions on the homology of $A$ give
$$\pi_i(\map_*(A,K(\integers,n+m+1))=\left\{\begin{array}{c c}
0 & i<n\\
H_{m}(A) & i=n\\
\text{torsion} & n<i<m+1\\
\integers^l & i=m+1\\
0 &i>m
\end{array}\right.$$.
Let $P_{S^{n+1}}$ be the $S^{n+1}$ nullification functor, we have
$$\pi_i(P_{S^{n+1}}\map_*(A,K(\integers,n+m+1))=\left\{\begin{array}{c c}
0 & \text{otherwise}\\
H_{m}(A) & i=n\\
0 &i>m
\end{array}\right.$$
and hence $P_{S^{n+1}}\map_*(A,K(\integers,n+m+1))\simeq K(H_m(A), n)$. By the assumptions on $A$ we have
$$P_{S^{n+1}}\map_*(A,K(\integers,n+m+1))\simeq \map_*(A,  L_g(K(\integers, n+m+1))).$$
Applying Proposition \ref{EMcor}, the localization of an Eilenberg-Maclane space is again an Eilenberg-Maclane space, so we obtain
$$\map_*(A,  L_g(K(\integers, n+m+1)))\simeq\map_*(A, K(G,n+m+1))$$
for some group $G$. This gives
$$K(H_m(A),n)\simeq\map_*(A,K(G,n+m+1)).$$
We obtain
$$0\isom\pi_{m+1}(K(H_m(A),n))\isom H^n(A;G)\isom \hom(H_n(A),G)\oplus\ext(H_{n-1}(A),G)$$
Since $H_n(A)$ is a free abelian group this implies that $G$ is trivial, so $$K(H_m(A),n)\simeq\map_*(A, K(G,n+m+1))\simeq\ast$$ which is a contradiction.
\end{proof}

\begin{corollary}
If $A$ is a simply connected finite CW-complex that is strongly $L$-good space, then $A$ has the homotopy type of a wedge of spheres $\bigvee_l S^k$ for some $k>0$, $l>0$.\end{corollary}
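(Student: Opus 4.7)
The plan is to deduce the homotopy-theoretic statement from the homological statement in Theorem \ref{FirstThm} using Hurewicz and Whitehead, with simple connectivity doing the work of bridging homology and homotopy.

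First, apply Theorem \ref{FirstThm} to conclude that $H_*(A,\integers)\isom H_*(\bigvee_l S^n,\integers)$ for some $n\geq 1$, $l\geq 0$. If $l=0$ then $A$ is acyclic and simply connected, hence contractible, and the conclusion holds trivially (as an empty wedge), so assume $l>0$. Since $A$ is simply connected, $H_1(A,\integers)=0$, which forces $n\geq 2$. Thus the only nontrivial reduced homology of $A$ is $H_n(A,\integers)\isom\integers^l$ with $n\geq 2$.

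Next, since $A$ is simply connected and $\tilde H_q(A,\integers)=0$ for $q<n$, the Hurewicz theorem gives an isomorphism $\pi_n(A)\isom H_n(A,\integers)\isom\integers^l$. Pick generators $\alpha_1,\dots,\alpha_l\in\pi_n(A)$ and represent them by pointed maps $S^n\to A$. Assembling these yields a pointed map
$$f\colon\textstyle{\bigvee_l S^n}\to A$$
which by construction induces an isomorphism on $\pi_n$, and hence (again by Hurewicz applied to both sides, which are simply connected with homology concentrated in degree $n$) an isomorphism on $H_n(-,\integers)$. In all other positive degrees the homology of both the source and target vanishes, so $f$ is a homology isomorphism.

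Finally, since $f$ is a homology isomorphism between simply connected CW-complexes, the Whitehead theorem implies that $f$ is a weak equivalence, and therefore a homotopy equivalence. This shows $A\simeq\bigvee_l S^n$ with $n\geq 2$, completing the proof. The only step requiring care is ensuring $n\geq 2$, which is immediate from simple connectivity once $l>0$; the rest is a direct application of the classical Hurewicz--Whitehead package to the homological output of Theorem \ref{FirstThm}.
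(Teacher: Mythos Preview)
Your proposal is correct and follows essentially the same approach as the paper: use Theorem \ref{FirstThm} to get the homology, apply Hurewicz to find generators of $\pi_n(A)$, assemble them into a map $\bigvee_l S^n\to A$, and invoke Whitehead on the resulting homology isomorphism between simply connected complexes. Your version is in fact slightly more careful than the paper's, since you explicitly dispose of the trivial case $l=0$ and note that simple connectivity forces $n\geq 2$.
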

\begin{proof}
 By the Hurewicz isomorphism $h:\pi_nX\to H_nX\isom\integers^l$, we have maps $\alpha_i: S^n\to X$ for $i=1,\dots,l$ such that $\{h(\alpha_i)|i=1,\dots,l\}$ are generators of $H_n(X)$. The map $\bigvee_i\alpha_i:\bigvee_i S^n\to X$ induces isomorphisms of homology groups, and by the Whitehead theorem, it is a homotopy equivalence.
\end{proof}

\section{Proof of the Main Theorem}
\label{MAIN THM SEC}
In view of Theorem \ref{FirstThm} in order to complete the proof of Theorem \ref{MainThm} it will suffice to show that the following holds:

\begin{theorem}\label{detectableLgood}
If $A$ is a detectable, finite $CW$-complex then $A$ is strongly $L$-good.
\end{theorem}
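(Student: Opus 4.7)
The plan is to show that for any map $f: Z \to W$ the choice $g := A \wedge f$ witnesses strong $L$-goodness of the detectable space $A$. The strategy is to exhibit both $L_f \map_*(A, X)$ and $\map_*(A, L_g X)$ as the underlying space of a single homotopy $\TT$-algebra, characterized by a universal property inside $\halg^{\TT}$.

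First, the smash--hom adjunction
$$\map_*(A \wedge Z, L_g X) \simeq \map_*(Z, \map_*(A, L_g X))$$
together with its $W$-analogue, combined with $g$-locality of $L_g X$, shows that $\map_*(A, L_g X)$ is $f$-local. The coaugmentation $X \to L_g X$ then induces $\map_*(A, X) \to \map_*(A, L_g X)$, whose target is $f$-local, so the universal property of $L_f$ produces a canonical comparison
$$\psi : L_f \map_*(A, X) \longrightarrow \map_*(A, L_g X),$$
and it remains to prove that $\psi$ is a weak equivalence.

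To establish this I would promote the picture to the category of homotopy algebras. Since $L_f$ preserves finite products up to weak equivalence, $L_f \circ \tilde{\Omega} X$ is a homotopy $\TT$-algebra with underlying space $L_f \map_*(A, X)$, and $\psi$ is the underlying map of a natural morphism $L_f \tilde{\Omega} X \to \tilde{\Omega} L_g X$. The key claim is that this morphism is universal among homotopy $\TT$-algebra maps out of $\tilde{\Omega} X$ into algebras whose underlying space is $f$-local: given any such $\Psi$ and morphism $\tilde{\Omega} X \to \Psi$, the Quillen equivalence of Proposition \ref{detectablehomotopyalgs} transposes it to a map $X \to \tilde{B}\Psi$, and since $\map_*(A, \tilde{B}\Psi) \simeq U(\tilde{\Omega} \tilde{B}\Psi) \simeq U(\Psi)$ is $f$-local, the space $\tilde{B}\Psi$ is $g$-local. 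Thus $X \to \tilde{B}\Psi$ factors up to homotopy through $L_g X$, and applying $\tilde{\Omega}$ recovers the factorization $\tilde{\Omega} X \to \tilde{\Omega} L_g X \to \Psi$. Specializing $\Psi = L_f \tilde{\Omega} X$ supplies a homotopy inverse to $L_f \tilde{\Omega} X \to \tilde{\Omega} L_g X$, so passage to underlying spaces forces $\psi$ to be a weak equivalence.

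The main obstacle will be making the universal-property argument rigorous inside $\halg^{\TT}$: the Quillen equivalence only supplies derived mapping-space equivalences, so the factorizations $X \to L_g X \to \tilde{B}\Psi$ and their images under $\tilde{\Omega}$ have to be realized as honest morphisms of homotopy algebras after appropriate cofibrant and fibrant replacements before they can be assembled into a homotopy inverse. Once this bookkeeping is in place, the fact that weak equivalences in $\halg^{\TT}$ are detected on the underlying simplicial set (by the definition of its model structure, inherited via rigidification from $\alg^{\TT}$) upgrades the universality to the weak equivalence $\psi$, completing the proof.
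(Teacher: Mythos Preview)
Your proposal is correct and shares with the paper's proof the crucial choice $g = A \wedge f$ together with the key observation (your first paragraph, the paper's Lemma~\ref{flocallem}) that $X$ is $(A\wedge f)$-local if and only if $\map_*(A,X)$ is $f$-local. The difference lies in how the comparison is completed. You stay inside $\halg^{\TT}$ and argue directly that $\tilde{\Omega} L_g X$ enjoys the same universal property---initial among homotopy algebras under $\tilde{\Omega} X$ with $f$-local underlying space---as $L_f \tilde{\Omega} X$, and this is precisely where the cofibrant/fibrant bookkeeping you flag must be carried out. The paper instead transports the entire question to $\ho(R^A\top_*)$ via the derived adjunction and compares the two endofunctors $\tilde{B}_{\mathrm{der}} L_f \tilde{\Omega}_{\mathrm{der}}$ and $L_{A\wedge f}$ there: both are coaugmented idempotent with the same class of fixed objects (by the locality lemma), so an abstract uniqueness lemma for such functors (Lemma~\ref{idempotentfunctorlem}) furnishes a natural isomorphism between them, and postcomposing with $\tilde{\Omega}_{\mathrm{der}}$ finishes. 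The paper's packaging is a bit cleaner because the idempotent-functor lemma absorbs exactly the replacement issues you anticipate, at the cost of isolating and proving that separate lemma; your route is more hands-on and makes the role of the Quillen equivalence in producing the factorization through $L_g X$ more explicit.
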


Our proof will parallel Bousfield's argument showing that loop spaces commute with localization functors (Theorem \ref{LoopComm}). The argument relies on Segal's machinery to detect loop spaces which we will substitute with the Quillen equivalence $(\tilde{B}, \tilde{\Omega})$ given in the definition of a detectable space (Definition \ref{detectable}, Proposition \label{detectablehomotopyalgs}). The Quillen equivalence descends to a derived equivalence on homotopy categories. We will choose to work within the homotopy category, and will use the following lemma.

\begin{lemma}\label{idempotentfunctorlem}
If $F,G: \ho\CC \to \ho\DD$ are coaugmented, idempotent functors such that $c \isom F(C)$ if and only if $c\isom G(c)$ then $F$ and $G$ are naturally isomorphic.
\end{lemma}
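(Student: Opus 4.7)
My plan is to invoke the standard categorical principle that a coaugmented idempotent functor is determined up to natural isomorphism by its class of \emph{local} objects. The first step is to rephrase the hypothesis: for such a functor, $c\isom F(c)$ precisely when the coaugmentation $\eta^F_c\colon c\to F(c)$ is itself an isomorphism. Indeed, given any isomorphism $\alpha\colon c\to F(d)$, naturality of $\eta^F$ supplies a commutative square relating $\eta^F_c$ and $\eta^F_{F(d)}$ via $\alpha$ and $F(\alpha)$; three of these four maps are already known to be isomorphisms (the middle one by idempotence), forcing $\eta^F_c$ to be an isomorphism as well. Hence the hypothesis amounts to saying that $F$ and $G$ have the same class of local objects.

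Next, I would record the universal property that $\eta^F_c$ enjoys: for any $F$-local object $d$, the map $(\eta^F_c)^*\colon\hom(F(c),d)\to\hom(c,d)$ is a bijection. Existence of a preimage for $g\colon c\to d$ is given by $(\eta^F_d)^{-1}\circ F(g)$, and uniqueness follows because any two candidates become equal after applying $F$, upon using that $\eta^F_{F(c)}$ is an isomorphism together with the naturality square of $\eta^F$ at the candidate maps. Since $G(c)$ is $G$-local, and therefore $F$-local by the shared class of local objects, this universal property produces a unique map $\phi_c\colon F(c)\to G(c)$ with $\phi_c\circ\eta^F_c=\eta^G_c$. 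Symmetrically one obtains $\psi_c\colon G(c)\to F(c)$ satisfying $\psi_c\circ\eta^G_c=\eta^F_c$.

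To finish, both $\psi_c\phi_c$ and $\mathrm{id}_{F(c)}$ are maps $F(c)\to F(c)$ whose precomposition with $\eta^F_c$ is $\eta^F_c$, so by uniqueness in the universal property they agree; analogously $\phi_c\psi_c=\mathrm{id}_{G(c)}$, so each $\phi_c$ is an isomorphism. Naturality of $\phi$ in $c$ is established by the same uniqueness argument applied to the pair $G(f)\phi_c$ and $\phi_{c'}F(f)$, both of which factor $\eta^G_{c'}\circ f$ through $\eta^F_c$ into the $F$-local object $G(c')$.

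I do not expect a substantial obstacle: the argument is purely formal and is the standard proof that any two reflective localizations with the same local objects coincide. The only point worth being careful about is that the entire argument is phrased at the level of morphism sets in $\ho\CC$, so no model-categorical lifting or strictification is required for the proof to go through.
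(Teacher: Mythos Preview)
Your proposal is correct and follows essentially the same approach as the paper: both construct the comparison map as $(\eta^F_{G(c)})^{-1}\circ F(\eta^G_c)$ (the paper's $\gamma^{-1}\circ F(\beta)$) and establish its uniqueness by applying $F$ and using that $\eta^F_{F(c)}$ is an isomorphism together with naturality of the coaugmentation. Your write-up is in fact a bit more complete than the paper's, since you explicitly justify why $c\isom F(c)$ forces the coaugmentation $\eta^F_c$ itself to be an isomorphism and you verify naturality of the comparison map, both of which the paper leaves implicit.
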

\begin{proof}
For any $c\in\CC$ we have the following diagram.
$$\begin{tikzpicture}
\matrix (m) 
[matrix of math nodes, row sep=5em, column sep=10em, text height=1.5ex, text depth=0.25ex]
{
c & F(c) & F(F(c))\\
 & G(c) & F(G(c))\\
};
\path[->]
(m-1-1) edge node[above]{\footnotesize$\alpha$}  (m-1-2)
(m-1-1)   edge node[below]{\footnotesize$\beta$} (m-2-2)
(m-1-2) edge node[above]{$\simeq$}node[below]{$\omega$} (m-1-3)
(m-1-2) edge node[above]{\footnotesize$F(\beta)$} (m-2-3)
(m-2-2) edge node[below]{\footnotesize$\gamma$} node[above]{$\simeq$} (m-2-3)
;
\path[->, dashed]
(m-1-2) edge[bend left] node[right]{\small$\delta$} (m-2-2)
(m-1-3) edge[bend left] node[right]{\small$F(\delta)$}  (m-2-3)
;
\path[->, densely dotted]
(m-1-2) edge[bend right] node[left]{\small$\lambda$}  (m-2-2)
(m-1-3) edge[bend right] node[left]{\small$F(\lambda)$} (m-2-3)
;
\end{tikzpicture}$$
where $\alpha$, $\beta$, $\gamma$, and $\omega$ are defined by the coaugmentation maps. Since $F(c)\simeq F(F(c))$, $G(c)\simeq F(G(c))$ and $\gamma$ is an isomorphism. We define $\delta=\gamma^{-1}\circ F(\beta)$. We first claim that $\delta$ is unique up to weak equivalence, to show this assume $\lambda$ is another map $F(c)\to G(c)$. Then we get $F(\lambda)\circ\omega\simeq F(\beta) \simeq F(\delta)\circ\omega$ and since $\omega$ is a weak equivalence we have $F(\lambda)\simeq F(\delta)$ and hence $\lambda\simeq\delta$. The factorization of $\beta$ through $\alpha$ is unique up to homotopy. We can similarly show the factorization of $\alpha$ through $\beta$ is unique up to homotopy and thus proving $\delta$ is an isomorphism.

\end{proof}
 
 We will use the following fact which is a direct consequence of the mapping space smash product adjunction. 

\begin{lemma}\label{flocallem}
$X$ is $(A\wedge f)$-local if and only if $\map_*(A,X)$ is $f$-local.
\end{lemma}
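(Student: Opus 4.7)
The plan is to derive the statement as a direct consequence of the smash--mapping-space adjunction for pointed spaces. Writing $f\colon Z\to W$, I first unfold both conditions. Saying $X$ is $(A\wedge f)$-local means that
$$(A\wedge f)^*\colon \map_*(A\wedge W, X) \to \map_*(A\wedge Z, X)$$
is a weak equivalence, while saying $\map_*(A,X)$ is $f$-local means that
$$f^*\colon \map_*\bigl(W, \map_*(A,X)\bigr) \to \map_*\bigl(Z, \map_*(A,X)\bigr)$$
is a weak equivalence.

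Next I would invoke the standard pointed exponential law $\map_*(A\wedge B, X)\cong\map_*\bigl(B, \map_*(A,X)\bigr)$, valid in a convenient category of pointed spaces. Applied to $B=W$ and $B=Z$ separately, this produces a commutative square whose horizontal arrows are the adjunction homeomorphisms and whose two vertical arrows are precisely the maps $(A\wedge f)^*$ and $f^*$ displayed above. Since the horizontal arrows are homeomorphisms, one vertical arrow is a weak equivalence if and only if the other is, which establishes both directions of the biconditional simultaneously.

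The only substantive point is to verify that the adjunction isomorphism is natural not only in the object $B$ but in the morphism $f\colon Z\to W$ as well; that is, one must check that the adjoint transpose of the composite $g\circ (A\wedge f)$ equals $\tilde{g}\circ f$, where $\tilde{g}$ is the transpose of $g$. This is a routine exponential-transpose chase and is the only nontrivial step in the argument, but it is entirely formal in any closed symmetric monoidal structure on pointed spaces. The identical reasoning lifts verbatim to the simplicial mapping space $\map^{\Delta}_*$ employed elsewhere in the paper, since the adjunction enriches over simplicial sets.
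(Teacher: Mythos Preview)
Your proposal is correct and matches the paper's approach exactly: the paper does not spell out a proof but simply remarks that the lemma is ``a direct consequence of the mapping space smash product adjunction,'' which is precisely the argument you have written out in detail. Your commutative-square formulation with the naturality check is the standard way to make that one-line remark rigorous.
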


\begin{proof}[Proof of Theorem \ref{detectableLgood}]
Since $A$ is detectable, there is a derived equivalence on the level of homotopy categories.
$$\tilde{B}_{\text{der}}:\ho\alg^T\rightleftarrows \ho R^A\top_*:\tilde{\Omega}_{\text{der}}.$$

We first claim the functors $\tilde{B}_{\text{der}} L_f \tilde{\Omega}_{\text{der}}$ and $L_{A\wedge f}$ are both coaugmented, idempotent functors with the same images, thus by lemma \ref{idempotentfunctorlem} are the naturally isomorphic. We have natural maps
$$X\overset{\simeq}\longleftarrow \tilde{B}_{\text{der}}\tilde{\Omega}_{\text{der}}X\longrightarrow \tilde{B}_{\text{der}} L_f \tilde{\Omega}_{\text{der}}X$$
where the latter is induced from the coaugmentation map of $L_f$, hence $\tilde{B}_{\text{der}} L_f \tilde{\Omega}_{\text{der}}$ is coaugmented. 

We now show for $X$ in $R^A\top_*$, $\tilde{B}_{\text{der}}L_f \tilde{\Omega}_{\text{der}}X\simeq X$ if and only if $L_{A\wedge f}X\simeq X$. As $X\simeq \tilde{B}_{\text{der}}\tilde{\Omega}_{\text{der}}X$, the spaces $\tilde{B}_{\text{der}}L_f \tilde{\Omega}_{\text{der}}X$ and $X$ are weakly equivalent if and only if there is a weak equivalence of algebras $L_f \tilde{\Omega}_{\text{der}} X\simeq \tilde{\Omega}_{\text{der}}X$. By the model category structure on the category of algebras over a theory, $L_f \tilde{\Omega}_{\text{der}} X\simeq \tilde{\Omega}_{\text{der}}X$ if and only if the underlying spaces $L_f\map_*(A,X)$ and  $\map_*(A,X)$ are weakly equivalent which implies $\map_*(A,X)$ must be $f$-local. By lemma $\ref{flocallem}$, we obtain that $\tilde{B}_{\text{der}}L_f \tilde{\Omega}_{\text{der}}X\simeq X$ if and only if $L_{A\wedge f}X\simeq X$.

The functors $\tilde{B}_{\text{der}} L_f \tilde{\Omega}_{\text{der}}$ and $L_{A\wedge f}$ are then naturally isomorphic, so by composing with $\tilde{\Omega}_{\text{der}}$, the following algebras are weakly equivalent
$$L_f\tilde{\Omega}_{\text{der}}X\simeq \tilde{\Omega}_{\text{der}}\tilde{B}_{\text{der}} L_f \tilde{\Omega}_{\text{der}}X\simeq \tilde{\Omega}_{\text{der}}L_{A\wedge f}X$$
with associated underlying spaces $$L_f\map_*(A,X)\simeq \map_*(A, L_{A\wedge f}X)$$
completing the proof that $A$ is strongly $L$-good.
\end{proof}

\bibliographystyle{plain}
\bibliography{ReferencesPaper}

\end{document}